\setlist[itemize,1]{label=$\bullet$}
\setlist[itemize,2]{label=$\bullet$}
\setlist[itemize,3]{label=$\bullet$}
\setlist[itemize,4]{label=$\bullet$}
\setlist[itemize,5]{label=$\bullet$}
\setlist[itemize,6]{label=$\bullet$}
\setlist[itemize,7]{label=$\bullet$}
\setlist[itemize,8]{label=$\bullet$}
\setlist[itemize,9]{label=$\bullet$}
\setlist[itemize,10]{label=$\bullet$}
\setlist[itemize,11]{label=$\bullet$}
\setlist[itemize,12]{label=$\bullet$}
\setlist[itemize,13]{label=$\bullet$}
\setlist[itemize,14]{label=$\bullet$}
\setlist[itemize,15]{label=$\bullet$}
\setlist[itemize,16]{label=$\bullet$}
\setlist[itemize,17]{label=$\bullet$}
\setlist[itemize,18]{label=$\bullet$}
\newcommand{\scalar}[2]{\langle#1\,,#2\rangle}
\newcommand{\scalarb}[2]{\langle#1\,,#2\rangle_{\partial\Omega}}
\newcommand{\norm}[1]{\|#1\|}
\newcommand{\normm}[1]{|\negthinspace\|#1\|\negthinspace|}
\renewcommand{\H}{\mathcal{H}}
\newcommand{\D}{\mathcal{D}}
\renewcommand{\L}{\mathcal{L}}
\renewcommand{\d}{\mathrm{d}}
\newcommand{\pO}{{\partial \Omega}}
\newcommand{\C}{\mathcal{C}}
\newcommand{\1}{\mathbb{I}}
\newcommand{\diagdots}[3][-25]{%
  \rotatebox{#1}{\makebox[0pt]{\makebox[#2]{\xleaders\hbox{$\cdot$\hskip#3}\hfill\kern0pt}}}%
}
\definecolor{mygreen}{rgb}{0, 0.7, 0.4}
\definecolor{myviolet}{rgb}{0.5, 0, 1}
\definecolor{line1}{rgb}{0.85, 0,0.9 }
\definecolor{line2}{rgb}{0, 0,0.55 }
\definecolor{line3}{rgb}{0, 0,1 }
\definecolor{line4}{rgb}{0.44, 0,1 }
\definecolor{line5}{rgb}{0.2, 0.6,1 }
\newmdenv[
  topline=false,
  bottomline=false,
  rightline=false,
  skipabove=\topsep,
  skipbelow=\topsep,
  linewidth=0.8pt,
  rightmargin=-0.2cm
]{siderules}
\newtheorem{theorem}{Theorem}
\newtheorem{proposition}[theorem]{Proposition}
\newtheorem{lemma}[theorem]{Lemma}
\newtheorem{definition}[theorem]{Definition}
\newtheorem{assumption}{Assumption}
\numberwithin{equation}{section}
\numberwithin{theorem}{section}
\def\ps@pprintTitle{%
 \let\@oddhead\@empty
 \let\@evenhead\@empty
 \def\@oddfoot{}%
 \let\@evenfoot\@oddfoot}
\begin{document}

\begin{frontmatter}

\title{Finite element method to solve the spectral problem for arbitrary self-adjoint extensions of the Laplace-Beltrami operator on manifolds with a boundary\tnoteref{titlenote}}
\tnotetext[titlenote]{The authors would like to give special thanks to Prof.\ A.\ Ibort for his valuable comments and useful discussions. They also acknowledge the partial support by the Spanish MINECO grant MTM2014-54692-P and QUITEMAD+, S2013/ICE-2801. A.\ L-Y.\ has been partially supported  by NICOP, N62909-15-1-2011.}

\author[address1]{A.\ L\'opez-Yela}
\ead{alyela@tsc.uc3m.es}

\author[address2,address3]{J.M.\ P\'erez-Pardo\corref{corr}}
\cortext[corr]{Corresponding author}
\ead{jmanuelperpar@gmail.com, jmppardo@math.uc3m.es}

\address[address1]{Depto.\ de teor\'ia de la se\~nal y telecomunicaciones, Univ.\ Carlos III de Madrid, Avda.\ de la Universidad 30, 28911 Legan\'es, Madrid, Spain.}
\address[address2]{INFN, Sezione di Napoli, Complesso Universitario di Monte S.~Angelo, via Cintia, 80126 Naples, Italy.}
\address[address3]{Depto.\ de Matem\'aticas, Univ.\ Carlos III de Madrid, Avda.\ de la Universidad 30, 28911 Legan\'es, Madrid, Spain.}

\begin{abstract}
A numerical scheme to compute the spectrum of a large class of self-adjoint extensions of the Laplace-Beltrami operator on manifolds with boundary in any dimension is presented. The algorithm is based on the characterisation of a large class of self-adjoint extensions of Laplace-Beltrami operators in terms of their associated quadratic forms. The convergence of the scheme is proved. A two-dimensional version of the algorithm is implemented effectively and several numerical examples are computed showing that the algorithm treats in a unified way a wide variety of boundary conditions.
\end{abstract}

\begin{keyword}
Self-adjoint extensions \sep Spectral problem \sep Laplace \sep Higher dimension \sep Boundary conditions \sep Finite element method 
\MSC[2010] 35J05 \sep 35J10 \sep 35J25 \sep 35P05 \sep 35Q40 \sep 65N12 \sep 65N25 \sep 65N30
\end{keyword}

\end{frontmatter}


\section{Introduction}\label{sec:Intro}
The study of self-adjoint extensions of symmetric operators plays a fundamental role not only in the foundations, but increasingly so in the applications of Quantum Mechanics as they determine the spectrum of the corresponding system.  Among them, it is paramount the role played by the Laplace-Beltrami operator, as it corresponds to the to the time independent Schr\"odinger equation for a free particle.

When boundaries are present such operators can be defined easily on a domain where it is symmetric but usually not self-adjoint. Self-adjointness is a crucial property that guarantees the reality of the spectrum. Moreover, Stone's Theorem establishes that it also guarantees the unitarity of the evolution governed by the Schr\"odinger equation. See, e.g. \cite[Chapter X]{reed75} for further details and motivation. Starting from a symmetric operator one needs to choose a self-adjoint extension of it, which in general is not unique. In the present context of differential operators on manifolds with boundaries this is done by selecting appropriately boundary conditions. Different boundary conditions represent different physical situations, see for instance the reviews \cite{As15,Ib15} and references therein. Consider the Laplace operator on an interval. Dirichlet boundary conditions represent a particle trapped in a box while quasiperiodic boundary conditions represent that the particle is moving on a closed curve surrounding a magnetic field \cite{As83}. Other boundary conditions represent other physical situations. For instance, they can be chosen to represent point like interactions \cite{DAFT08}.

In dimension one, the problem of characterising self-adjoint extensions and computing their spectrum and eigenvectors was addressed in \cite{Ib13}. However, the algorithm proposed there cannot be applied in dimension higher than one in a straightforward way. The main reason for this is that in dimension higher than one the space of self-adjoint extensions is infinite dimensional \cite{Grubb68,ibortlledo14b}. In dimension one, the self-adjoint extensions can be parameterised by the set of unitary operators acting on a finite dimensional vector space and therefore they can be implemented exactly. This is not the case in dimension higher than one where in general one needs also to approximate the boundary conditions. This needs to be handled carefully when proving the convergence of a numerical scheme approximating the spectrum. The numerical study of self-adjoint extensions for the Laplace-Beltrami operator in dimension higher than one is also interesting from a mathematical perspective since it requires the development of finite element methods (FEM) that use completely different constructions of boundary elements than for the already well developed Dirichlet and Neumann boundary conditions, cf. \cite{babuska91, boffi10, ram02}.

New quantum technologies and applications require the implementation of boundary conditions that go beyond the usual ones in order to have a good description of their properties. For instance, Quantum Hall effect \cite{morandi88, BZ06}, superconductors surrounded by insulators \cite{asorey13, asorey16}, Casimir Effect \cite{plunien86, marolf97, asorey06}, computation of solutions of Bloch periodic wave-functions on periodic lattices \cite{sukumar09} and other novel proposals like the generation of entanglement or the study of topology change by modifying the boundary conditions \cite{IMP, Pe15}. 
Self-adjoint boundary conditions can also be used to model physical situations like point interactions \cite{DAFT08} or resonators coupled to thin antennas \cite{CP17, ES97, ES07}.
It is important to notice that the addition of regular potentials does not jeopardise the self-adjointness of the domain of a differential operator, cf.\ \cite{reed75,kato95}. Hence, the analysis carried out in this article can be used straightforwardly for Schr\"odinger operators by just computing the contribution of the potential as it is done in standard FEM. 

In this context, boundary conditions are going to be treated as the only input parameter of the problem and the geometry will remain fixed. In the standard FEM approach, one needs to distinguish \emph{a priori} which boundary conditions are essential and which ones are natural in order to construct the appropriate FEM. In contrast, the algorithm presented here treats natural and essential boundary conditions in a unified way. It is able to deal with a diversity of boundary conditions like Dirichlet, Neumann, Robin, mixed, periodic, quasi-periodic (also called Bloch-periodic), or even more general ones like those appearing in \cite{Ib14} by just modifying the input parameters.

This article focuses on the construction and analysis of the aforementioned algorithm to show its capabilities. Moreover, the convergence conditions of this approach are proven not only for the particular realisation presented here, but for a general situation. The approach for the construction of finite elements at the boundary, as it is proposed here, should be taken as a complement to already existent and well-established all-purpose routines, for instance, the one presented in \cite{strouboulis00}. However, standard approaches do not allow to solve the problem for the variety of boundary conditions that can be handled with the present scheme. Implementation of more efficient approaches to speed up convergence will be considered in the future. These include mesh refinements or increasing the polynomial degree of the finite elements, i.e.\ implementation of h-method or p-method, as well as other recent developments like including probabilistic indetermination of the input data as it is done in \cite{babuska07,babuska04,babuska05}. These latter considerations will play a relevant role when considering more complex geometries than the ones considered here.

The implementation of the FEM to cope with boundary conditions defined by unitary operators at the boundary is performed by adding a rim of boundary elements to the domain that serve to implement the finite dimensional approximation of the domain of the given operator.  Such elements have a particular structure that has been carefully crafted to guarantee the convergence of the domains and the quadratic forms approximating the original problem. In order to implement all these different boundary conditions it is only needed to add the aforementioned rim of boundary elements. In the interior of the domain, the bulk, one can use well developed numerical schemes that are already available, for instance \cite{strouboulis00}. 

The article is organised as follows. In Section \ref{sec:QF} we introduce the family of self-adjoint extensions of the Laplace-Beltrami operator that is suitable for the numerical approximation of its spectrum. In Section \ref{sec:ArbitraryDimension} we provide sufficient conditions on the approximants of the spectral problem to guarantee convergence to the exact solutions and in Section \ref{sec:FEM} we construct them explicitly. In order to test the performance of the scheme we have built a standard finite element method at the bulk and we have complemented it with the proposed construction at the boundary. In Section \ref{sec:examples} several numerical experiments with applications in Physics have been solved to show the capabilities of the proposed scheme. The pseudocode of the implementation can be found in the appendix.


\section{Self-adjoint extensions of the Laplace-Beltrami operator and unitary operators at the boundary}\label{sec:QF}

In this section we introduce the family of operators that will be addressed by the numerical algorithm. We will present the most important results in order to keep the article as self-contained as possible. This will also serve to fix the notation.

Let $(\Omega,\pO,\eta)$ be a smooth orientable Riemannian manifold with metric
$\eta$ and smooth compact boundary $\partial \Omega$.
We will denote as  $\C^\infty (\Omega)$ the space of smooth functions on the Riemannian manifold $\Omega$, and
by $\C_c^\infty (\Omega)$ the space of smooth functions with compact support in the interior of $\Omega$.
The Riemannian volume form is denoted by $\d\mu_\eta$.

The \textbf{Laplace-Beltrami Operator} associated to the Riemannain manifold $(\Omega,\pO,\eta)$ is the
second order differential operator $\Delta_\eta:\C^\infty(\Omega)\to\C^\infty(\Omega)$ given in local coordinates $(x^i)$ on $\Omega$ by
$$\Delta_\eta\Phi=\sum_{i,j}\frac{1}{\sqrt{|\eta|}}\frac{\partial}{\partial x^i}\sqrt{|\eta|}\eta^{ij}\frac{\partial\Phi}{\partial x^j}\;.$$

The different self-adjoint extensions of this operator describe the dynamics of a quantum free particle moving on the manifold $\Omega$\,. 
The Laplace-Beltrami operator can be defined in the domain consisting of smooth functions $\C^\infty (\Omega)$ on $\Omega$, but it is not self-adjoint on it (neither symmetric). If the domain is restricted to the subspace $\C_c^\infty (\Omega)$, the operator is symmetric but not self-adjoint. It is not obvious, in fact it is a difficult problem, how to choose the domains appropriately in such a way to make the operator (essentially) self-adjoint, cf.\ \cite{Grubb68,ibortlledo14b}. Moreover, once this is done, it becomes a problem to compute efficiently its spectrum .

In what follows, we describe a class of weak problems that are associated to a large family of self-adjoint extensions of the Laplace-Beltrami operator. This class contains all the well-known boundary conditions that lead to self-adjoint extensions of the Laplace-Beltrami operator like Dirichlet, Neumann or periodic boundary conditions. Furthermore, this class allows to numerically approximate its spectrum, as is showed in Section~\ref{sec:ArbitraryDimension}.

The \textbf{Sobolev space of order} $k$ on the manifold $\Omega$ will be denoted by $\H^k(\Omega)$ and the associated norm and scalar product by $\norm{\cdot}_k$ and $\scalar{\cdot}{\cdot}_k$ respectively. When the manifold to be considered is not clear from the context, we will use the more explicit subindices $\norm{\cdot}_{\H^k(\Omega)}$ and $\scalar{\cdot}{\cdot}_{\H^k(\Omega)}$\,.
The boundary of the Riemannian manifold is going to be denoted by $\pO$ and the Riemannian measures, on the manifold $\Omega$ and its boundary $\pO$, by $\mu_{\eta}$ and $\mu_{\partial\eta}$ respectively. 
The spaces of smooth functions over the two manifolds verify that $\C^\infty(\Omega)\bigr|_{\pO}\simeq\C^{\infty}(\pO)$. We will need the following results.

\begin{definition} \label{def:tracemap}
Let $\gamma: \H^1(\Omega) \to \H^{1/2}(\pO)$ be the surjective map that provides the restriction to the boundary $\gamma(\Phi) = \Phi|_{\pO}$\,, cf.\ \cite{lions72}. Let $\H^1_0 := \ker \gamma$. The \textbf{boundary Sobolev space} $\H_b$ is the orthogonal complement of $\H^1_0$ with respect to the scalar product in $\H^1(\Omega)$. That is 
$$\H^1(\Omega) = \H_b\oplus\H^1_0\,.$$
The orthogonal projections onto these subspaces are denoted respectively by $\pi_b$ and $\pi_0$.
\end{definition}

A direct consequence of \cite[Theorem 8.3]{lions72}, \cite[Theorem 7.39]{adams03} is the following:

\begin{proposition} \label{prop:boundarymap}
	Let $\gamma_b: \H_b \to \H^{1/2}(\pO)$ be the restriction of the trace map to the boundary Sobolev space. Then $\gamma_b$ is a continuous bijection.
\end{proposition}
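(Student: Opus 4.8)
The statement is essentially a consequence of the abstract structure of the orthogonal decomposition in Definition~\ref{def:tracemap} together with the surjectivity and boundedness of the trace map supplied by the cited theorems. The plan is to verify the three defining properties of a continuous bijection — boundedness, injectivity, surjectivity — one at a time, treating the trace theorem of Lions and Adams as the only external input (namely, that $\gamma$ is a continuous surjection whose kernel $\H^1_0$ is therefore a closed subspace of the Hilbert space $\H^1(\Omega)$).

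First I would dispose of continuity. Since $\gamma_b = \gamma|_{\H_b}$ and $\H_b$ carries the subspace topology inherited from $\H^1(\Omega)$, boundedness of $\gamma$ immediately yields $\norm{\gamma_b\Phi}_{1/2} = \norm{\gamma\Phi}_{1/2} \le C\norm{\Phi}_{\H^1(\Omega)}$ for all $\Phi\in\H_b$, with the same constant $C$. Next, for injectivity, I would compute the kernel: $\ker\gamma_b = \H_b\cap\ker\gamma = \H_b\cap\H^1_0$. Because $\H_b$ is by construction the orthogonal complement of $\H^1_0$ in $\H^1(\Omega)$, this intersection is $\{0\}$, so $\gamma_b$ is injective. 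The only point deserving a remark here is that the orthogonal decomposition is legitimate precisely because $\H^1_0=\ker\gamma$ is closed, which is exactly the continuity of $\gamma$.

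Finally, for surjectivity I would use the projection $\pi_b$. Given $f\in\H^{1/2}(\pO)$, surjectivity of $\gamma$ provides some $\Psi\in\H^1(\Omega)$ with $\gamma(\Psi)=f$. Writing $\Psi=\pi_b\Psi+\pi_0\Psi$ with $\pi_0\Psi\in\H^1_0=\ker\gamma$, linearity of $\gamma$ gives $\gamma_b(\pi_b\Psi)=\gamma(\pi_b\Psi)=\gamma(\Psi)-\gamma(\pi_0\Psi)=f$, so $f$ lies in the range of $\gamma_b$ and $\pi_b\Psi\in\H_b$ is an explicit preimage.

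There is no genuine obstacle beyond the trace theorem already invoked; the argument is a short exercise in the functional-analytic content of the Hilbert-space decomposition. I would additionally remark, although it is not part of the claim, that since $\H_b$ and $\H^{1/2}(\pO)$ are Hilbert (hence Banach) spaces and $\gamma_b$ is a continuous bijection between them, the bounded inverse theorem yields continuity of $\gamma_b^{-1}$ as well, so that $\gamma_b$ is in fact a topological isomorphism — a fact that is convenient to have available when one later constructs the boundary finite elements.
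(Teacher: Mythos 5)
Your proof is correct and matches the paper's approach: the paper gives no explicit argument, stating the proposition as a direct consequence of the trace theorem in \cite{lions72,adams03}, and your write-up is precisely the routine elaboration of that claim (continuity by restriction, injectivity since $\H_b\cap\ker\gamma=\{0\}$, surjectivity via $\pi_b$ applied to a preimage). Your closing remark that the bounded inverse theorem makes $\gamma_b$ a topological isomorphism is also consistent with how the paper later uses the Lipschitz constant of $\gamma_b^{-1}$ in the proof of Lemma~\ref{lem:conveigen}.
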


In order to prove the convergence results we will use the well-known notion of closed, semibounded quadratic forms and their relation with self-adjoint operators. These provide the appropriate analytic setting for our purposes. Standard references on this subject are \cite[Chapter~VI]{kato95}, \cite[Section~VIII.6]{reed72} or \cite[Section~4.4]{davies95}. We recall here some results from \cite{ibortlledo14b} that describe a large class of self-adjoint extensions of the Laplace-Beltrami operator. 
The extensions are parameterized in terms of suitable unitary operators on the Hilbert space of square integrable functions on the boundary. 

\begin{definition}\label{DefGap}
Let $U$ be unitary and denote its spectrum by $\sigma(U)$. The unitary operator $U$ \textbf{has spectral gap at $-1$} if one of the following conditions hold:
\begin{enumerate}
\item $\1+U$ is invertible.
\item $-1\in\sigma(U)$ and $-1$ is not an accumulation point of $\sigma(U)$.
\end{enumerate}
The eigenspace associated to the eigenvalue $-1$ is denoted by $W$ and called the \textbf{relevant subspace}. The corresponding orthogonal projections will be written as $P_U$ and $P_{U}^\bot=\1-P_U$.
\end{definition}

\begin{definition}\label{partialCayley}
Let $U$ be a unitary operator with spectral gap at $-1$\,. 
The \textbf{partial Cayley transform} $A_U\colon\L^2(\pO)\to \L^2(\pO)$ is
the bounded operator
\[
A_U:=i\,P_{U}^\bot (U-\mathbb{I}) (U+\mathbb{I})^{-1}\;.
\]
\end{definition}

\begin{definition}\label{def:admissible}
Let $U: \L^2(\pO)\to\L^2(\pO)$ be a unitary operator with spectral gap at $-1$\,. The unitary is said to be \textbf{admissible} if the partial Cayley transform $A_U$ leaves the subspace $\H^{1/2}(\pO)$ invariant and is continuous with respect to the Sobolev norm of order $1/2$\,, i.e. there exists $K\in\mathbb{R}$ such that
$$\norm{A_U\varphi}_{\H^{1/2}(\pO)}\leq K \norm{\varphi}_{\H^{1/2}(\pO)}\;,\quad \forall \varphi \in \H^{1/2}(\pO)\;.$$
\end{definition}

\begin{definition}
Let $U: \L^2(\pO)\to\L^2(\pO)$ be a unitary operator with spectral gap at $-1$\,. The domain $\D_U$ associated to the unitary operator $U\in \mathcal{U}\left(L^2(\pO)\right)$ with gap at $-1$ is defined by
\begin{equation}\label{domaincha4}
\D_U=\bigl\{ \Phi\in\H^1(\Omega)\bigr|   \; P_U\gamma(\Phi)=0 \bigr\}\;.
\end{equation}
\end{definition}

\begin{definition}\label{DefQU}
Let $U: \L^2(\pO)\to\L^2(\pO)$ be a unitary operator with spectral gap at $-1$\,, $A_U$ the corresponding partial Cayley transform and $\gamma$
the trace map considered in Def.~\ref{def:tracemap}.
The Hermitean quadratic form $Q_U$ with domain $\D_U$ is defined by
$$Q_U(\Phi,\Psi)=\scalar{\d\Phi}{\d\Psi}-\scalarb{\gamma(\Phi)}{A_U\gamma(\Phi)}\;.$$
\end{definition}

The next theorem characterises the class of self-adjoint extensions of the minimal Laplace-Beltrami operator $-\Delta_{\mathrm{min}}$ that we will be interested in. We refer to \cite{ibortlledo14b} for a complete proof and additional motivation.\\

\begin{theorem}\label{thm:char_U}
Let $U\colon\L^2(\pO)\to\L^2(\pO)$ be an admissible unitary operator with spectral gap at $-1$. 
Then, the quadratic form $Q_U$ with domain $\D_U$ is semi-bounded from below and closable. Its closure 
is represented by a semi-bounded self-adjoint extension of the minimal Laplacian $-\Delta_{\mathrm{min}}$\,. We shall denote this self-adjoint extension by $-\Delta_U$. 
\end{theorem}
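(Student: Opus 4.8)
The plan is to invoke the first representation theorem for closed, semibounded, densely defined Hermitean forms (see \cite[Chapter~VI]{kato95}, \cite[Section~VIII.6]{reed72}): such a form is represented by a unique semibounded self-adjoint operator. The whole argument thus reduces to checking that $Q_U$ on $\D_U$ is (i) densely defined and Hermitean, (ii) semibounded from below, and (iii) closable; the identification with an extension of $-\Delta_{\mathrm{min}}$ then follows from an integration by parts. Density is immediate, since $\C_c^\infty(\Omega)\subseteq\D_U$ (compactly supported functions have vanishing trace, hence satisfy $P_U\gamma(\Phi)=0$) and $\C_c^\infty(\Omega)$ is dense in $\L^2(\Omega)$. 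Hermiticity follows from the self-adjointness of $A_U$, a direct computation: on the range of $P_{U}^\bot$ the operator $(U-\1)(U+\1)^{-1}$ is skew-adjoint because $U$ is normal, so multiplication by $i$ renders it self-adjoint, and it is extended by zero on $W$.

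First I would establish semiboundedness, which is the heart of the matter, since the Dirichlet term $\norm{\d\Phi}^2$ is non-negative and everything hinges on controlling the boundary term $\scalarb{\gamma(\Phi)}{A_U\gamma(\Phi)}$. Because $U$ has spectral gap at $-1$, the partial Cayley transform is a \emph{bounded} operator on $\L^2(\pO)$ (Def.~\ref{partialCayley}), say with norm $M$, so that
$$\bigl|\scalarb{\gamma(\Phi)}{A_U\gamma(\Phi)}\bigr|\leq M\,\norm{\gamma(\Phi)}_{\L^2(\pO)}^2\;.$$
The key analytic input is then a trace inequality of Ehrling type: since the trace map is continuous from $\H^1(\Omega)$ into $\H^{1/2}(\pO)$ and the embedding $\H^{1/2}(\pO)\hookrightarrow\L^2(\pO)$ is compact, for every $\epsilon>0$ there is a constant $C(\epsilon)$ with
$$\norm{\gamma(\Phi)}_{\L^2(\pO)}^2\leq \epsilon\,\norm{\d\Phi}^2+C(\epsilon)\,\norm{\Phi}^2\;.$$
Choosing $\epsilon$ so small that $M\epsilon<1$ gives
$$Q_U(\Phi)\geq (1-M\epsilon)\norm{\d\Phi}^2-M\,C(\epsilon)\norm{\Phi}^2\geq -M\,C(\epsilon)\norm{\Phi}^2\;,$$
which is precisely semiboundedness from below.

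Next I would prove closability by showing that the form norm $\normm{\Phi}^2:=Q_U(\Phi)+\bigl(1+M\,C(\epsilon)\bigr)\norm{\Phi}^2$ is equivalent to the $\H^1(\Omega)$ norm on $\D_U$. The coercivity (lower) bound is the displayed estimate above with the shift reabsorbed, giving $\normm{\Phi}^2\geq \delta\,\norm{\Phi}_{\H^1(\Omega)}^2$ for some $\delta>0$; the continuity (upper) bound follows again from the $M$-boundedness of $A_U$ together with continuity of the trace, $\norm{\gamma(\Phi)}_{\L^2(\pO)}\leq \norm{\gamma(\Phi)}_{\H^{1/2}(\pO)}\leq C\,\norm{\Phi}_{\H^1(\Omega)}$. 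Since $\D_U$ is the kernel of the continuous map $\Phi\mapsto P_U\gamma(\Phi)$ from $\H^1(\Omega)$ into $\L^2(\pO)$, it is a closed subspace of $\H^1(\Omega)$; being complete in an equivalent norm and continuously embedded in $\L^2(\Omega)$, the form is in fact closed, hence in particular closable.

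Finally, the representation theorem produces a semibounded self-adjoint operator $-\Delta_U$. To see that it extends the minimal Laplacian, I would take $\Phi\in\C_c^\infty(\Omega)$: its trace vanishes, so the boundary term drops and integration by parts yields $Q_U(\Phi,\Psi)=\scalar{-\Delta_\eta\Phi}{\Psi}$ for every $\Psi\in\D_U$, placing $\Phi$ in the operator domain with $-\Delta_U\Phi=-\Delta_\eta\Phi$; taking the closure shows $-\Delta_{\mathrm{min}}\subseteq -\Delta_U$, and self-adjointness then forces $-\Delta_U\subseteq -\Delta_{\mathrm{max}}$. I expect the main obstacle to be the boundary term: one must guarantee that it is form-bounded with respect to the Dirichlet form with relative bound strictly less than one, which is exactly what the trace inequality secures, while the admissibility hypothesis (continuity of $A_U$ on $\H^{1/2}(\pO)$, Def.~\ref{def:admissible}) is what ultimately controls the boundary pairing in the $\H^{1/2}$--$\H^{1/2}$ duality and makes rigorous the identification of the boundary condition when $\dom(-\Delta_U)$ is characterised more precisely.
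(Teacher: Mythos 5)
Your proof is correct, and a preliminary remark is in order: the paper itself contains no proof of Theorem~\ref{thm:char_U} --- it is quoted from \cite{ibortlledo14b} --- so the comparison is with that reference rather than with an in-text argument. Your route is in essentials the one followed there: the gap condition makes $A_U$ a bounded self-adjoint operator on $\L^2(\pO)$ (Def.~\ref{partialCayley}), an Ehrling-type trace inequality makes the boundary term infinitesimally form-bounded relative to the Dirichlet term, a KLMN-style absorption then gives semiboundedness and the equivalence of the form norm with the $\H^1(\Omega)$ norm on $\D_U$ (which is closed in $\H^1(\Omega)$ as the kernel of the continuous map $\Phi\mapsto P_U\gamma(\Phi)$), and Kato's first representation theorem together with integration by parts on $\C_c^\infty(\Omega)$ identifies the represented operator as a self-adjoint extension of $-\Delta_{\mathrm{min}}$. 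Note that your argument in fact yields that $Q_U$ is \emph{closed} on $\D_U$, not merely closable, so the closure in the statement is $Q_U$ itself; that is a sharper (and compatible) conclusion.

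One observation worth dwelling on: your core argument never invokes admissibility (Def.~\ref{def:admissible}); every step uses only the spectral gap, through the $\L^2(\pO)$-boundedness and self-adjointness of $A_U$. Since each step is valid as written, you have proved the stated conclusion under formally weaker hypotheses. This is not a defect, but be aware that admissibility is not idle in the paper: the $\H^{1/2}(\pO)$-continuity of $A_U$ (and of $P_U$) is what underlies the sharper domain description of Eq.~\eqref{eq:asorey} and, crucially for this article, the convergence analysis of the numerical scheme (Assumption~\ref{ass:1} and Lemma~\ref{lem:conveigen}); your closing remark locates this correctly. Two routine gaps to fill: the skew-adjointness of $(U-\1)(U+\1)^{-1}$ on $\ran P_U^\bot$ uses unitarity, $U^*=U^{-1}$, not mere normality (for a normal non-unitary operator the Cayley transform is not skew-adjoint); and the trace inequality, which you deduce from continuity of $\gamma$ into $\H^{1/2}(\pO)$ plus compactness of $\H^{1/2}(\pO)\hookrightarrow\L^2(\pO)$ (valid because $\pO$ is compact), requires the standard weak-compactness contradiction argument localised to a compact collar of $\pO$, since $\Omega$ itself is not assumed compact.
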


The relation between the unitary operator $U$ and the self-adjoint operator $-\Delta_U$ can be summarised as follows. The self-adjoint operator $-\Delta_U$ is the self-adjoint extension of the minimal Laplace-Beltrami operator that satisfies the boundary condition
\begin{equation}
	\varphi-i\dot{\varphi} = U(\varphi + i\dot{\varphi})\;.
\end{equation}
Here, $\varphi = \gamma(\Phi)$ and $\dot{\varphi} = \gamma\left( \frac{\d\Phi}{\d\mathbf{n}} \right)$, i.e.\ the restriction to the boundary of the function $\Phi$ and its normal derivative respectively. The normal direction is taken pointing outwards the manifold. More specifically, $-\Delta_U$ is the self-adjoint extension with domain
\begin{equation}\label{eq:asorey}
	\D(\Delta_U) = \{\Phi \in \H^2(\Omega) \mid \varphi-i\dot{\varphi} = U(\varphi + i\dot{\varphi}) \}\;.
\end{equation}


\section{Approximations of the spectral problem in arbitrary dimension by finite element methods}\label{sec:ArbitraryDimension}

In this section we develop a class of numerical algorithms, based in the finite element method, that can be used to approximate the spectral problem for the self-adjoint extensions of the Laplace-Beltrami operator described in Section \ref{sec:QF}. A standard reference for this method is \cite{brenner08}.

In what follows, let $-\Delta_U$ denote the self-adjoint operator associated to the closure of the quadratic form $Q_U$ of Def.~\ref{DefQU}. Thm.~\ref{thm:char_U} ensures that this closure exists and that the operator $-\Delta_U$, with domain $\D(\Delta_U)$\,, is semi-bounded from below. Moreover, $-\Delta_U$ is a self-adjoint extension of $-\Delta_\mathrm{min}$.

We are interested in obtaining numerical approximations of pairs $(\Phi,\lambda)\in\L^2(\Omega)\times\mathbb{R}$ that are solutions of the spectral problem
\begin{equation}\label{spectralproblem}
	-\Delta_U\Phi=\lambda\Phi\quad\Phi\in\D(\Delta_U)\;.
\end{equation}
As usual in finite element methods, the solutions can be obtained by approximating the solution of the problem in the weak form. That is, a pair $(\Phi,\lambda)\in\L^2(\Omega)\times\mathbb{R}$ is a solution of the spectral problem \eqref{spectralproblem} if and only if it is a solution of the weak spectral problem
	\begin{equation}\label{weakspectralproblem}
		Q_U(\Psi,\Phi)=\lambda\scalar{\Psi}{\Phi}\quad\forall\Psi\in\D_U\;,
	\end{equation}
with $\D_U$ the domain of the quadratic form $Q_U$ given in Eq.~\eqref{domaincha4}. Proving this equivalence is straightforward using the definitions of the previous section.

The way we shall approximate the solution of the weak spectral problem \eqref{weakspectralproblem} is by finding a family of finite-dimensional problems, in terms of an appropriate family of finite-dimensional spaces $\{S^N_{U}\}$, that verifies approximately the boundary conditions. Then, we will look for solutions $(\Phi_N,\lambda_N)\in S^N_{U}\times\mathbb{R}$ of these approximate spectral problems. Since the boundary conditions, represented by the unitary operator $U$, will also be approximated, we need to look for solutions of a finite-dimensional problem of the form
	\begin{equation}\label{approximatespectralproblem}
		Q_{U^N}(\Psi_N,\Phi_N)=\lambda_N\scalar{\Psi_N}{\Phi_N}\, ,\qquad\forall \Psi_N\in S^N_{U}\;.
	\end{equation}
where $\{U^N\}$ is a family of unitary operators.

The rest of this section is devoted to obtain sufficient conditions on the family $\{S^N_{U}\}$ such that the solutions of the approximate spectral problem \eqref{approximatespectralproblem} converge to the solutions of the weak spectral problem \eqref{weakspectralproblem}. In Section \ref{sec:FEM}, we construct explicitly a family $\{S^N_{U}\}$ for the two-dimensional case. Due to the non-locality of generic boundary conditions, one needs to introduce a non-local subspace of functions that is able to encode them.

Before introducing the actual algorithm, which will be done in Section~\ref{sec:FEM}, we discuss the general conditions guaranteeing convergence of the proposed numerical scheme. In particular, one needs to address the problem of approximating the boundary conditions.
It is worth mentioning that the considerations of this section work regardless of the dimension of the underlying manifold and serve as a stepping stone for implementations in any dimension.

In what follows we assume that the unitary operator $U$ describing the quadratic form $Q_U$ is admissible, cf.\ Def.~\ref{def:admissible}. Hence, we are under the conditions of Thm.~\ref{thm:char_U} and therefore the quadratic form
\begin{equation}\label{QFcha4}
	Q_U(\Phi,\Psi)=\scalar{\d\Phi}{\d\Psi}-\scalarb{\gamma(\Phi)}{A_U\gamma(\Phi)}
\end{equation}
with domain $\D_U=\bigl\{ \Phi\in\H^1(\Omega)\bigr|P_U\gamma(\Phi)=0 \bigr\}$\,, 
where $A_U$ is the partial Cayley transform of Def.~\ref{partialCayley}, is closable and semi-bounded from below.\\

We introduce now sufficient conditions on the family of finite-dimensional problems to ensure convergence to the solutions of the aforementioned spectral problem \eqref{spectralproblem}. First of all, one needs a family of finite-dimensional subspaces that approximate the Sobolev spaces of order 1. We shall call each member of this family a \textbf{finite elements space} and denote it by $S^N$, where $N\in\mathbb{Z}$ denotes the vector space dimension. We denote by $\{\Pi^N\}_{N\in\mathbb{Z}}$ the family of projections onto these subspaces: 
$$\Pi^N:\H^1(\Omega)\to S^N\;.$$
In general, this projections are not orthogonal projections.
As a general assumption we need to impose that 
\begin{equation}\label{eq:h1aprox}
	\norm{(\Pi^N-\1)\Phi}_1\stackrel{N\to\infty}{\longrightarrow}0\;.
\end{equation}
Since each $S^N$ is a closed subspace of $\H^1(\Omega)$, one can define the traces of these subspaces as
\begin{equation}
	s^N=\{\varphi\in\H^{1/2}(\pO) | \varphi=\gamma(\Phi)\;,\;\Phi\in S^N\}\;.
\end{equation}
We will call them \textbf{traces of the finite element spaces}. Notice that the traces of the finite element spaces do not have dimension $N$ in general, but are also finite-dimensional subspaces with $\operatorname{dim}{s^N}\leq N$.\\

\begin{assumption}\label{ass:1}
Let $U\in\mathcal{U}(\L^2(\pO))$ be an admissible unitary operator with gap. Let $\{U^N\}_{N\in \mathbb{Z}}$ be a family of unitary operators such that $U^N\in\mathcal{U}(s^N)$. The projections onto the relevant subspaces $$P_{U^N}:s^N\to s^N$$ and the partial Cayley transforms $$A_{U^N}:s^N\to s^N$$  can be lifted to $\H^{1/2}(\pO)$ by considering that the orthogonal complement to $s^N$ with respect to the scalar product in $\H^{1/2}(\pO)$ is in their kernel. Let the operators constructed this way be denoted with the same symbol respectively. We assume:
	\begin{enumerate}
		\item{ $\norm{(P_{U^N}-P_U)\varphi}_{\H^{1/2}(\pO)} \stackrel{N\to\infty}{\longrightarrow} 0\;,  \quad \varphi\in\H^{1/2} $}\;.\label{en:ass1i}\\
		\item {$\forall \Psi \in \L^2(\pO), \quad \scalar{\psi}{ (A_{U^N}-A_U)\varphi}_{\L^2(\pO)} \stackrel{N\to\infty}{\longrightarrow} 0\;, \quad \varphi\in\H^{1/2}$}\;.\label{en:ass1ii}\\
	\end{enumerate}
\end{assumption}

\begin{definition}\label{def: approx family}
	Let $\{U^N\}_N$ be a family of unitary operators satisfying Assumption~\ref{ass:1}. Consider the finite-dimensional subspaces
	\begin{equation}
		S^N_{U}=\{\Phi\in S^N| P_{U^N}\gamma(\Phi)=0\}\,.
	\end{equation}
	The family $\{S^N_U\}_N$ will be called an \textbf{approximating family} of the spectral problem \eqref{spectralproblem}.\\
\end{definition}

We prove the convergence in two steps. First, we shall prove the convergence of the eigenvalues. After that we will be able to prove the convergence of the eigenfunctions.\\

\begin{lemma}\label{lem:conveigen}
	Let $\{S^N_U\}_N$ be an approximating family and let $\{\lambda^N)\}_N$ be the sequence of eigenvalues corresponding to the $n$-th lowest eigenvalue of the approximate spectral problem \eqref{approximatespectralproblem}. This sequence converges to the n-th lowest eigenvalue of the weak spectral problem \eqref{weakspectralproblem}.
\end{lemma}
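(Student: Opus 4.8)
The plan is to establish convergence of the eigenvalues through the min-max (Courant--Fischer) characterization, which is the natural tool for comparing the Rayleigh quotients of the exact problem~\eqref{weakspectralproblem} and the approximate problems~\eqref{approximatespectralproblem}. Writing $R_U(\Phi) = Q_U(\Phi,\Phi)/\scalar{\Phi}{\Phi}$ and $R_{U^N}(\Phi) = Q_{U^N}(\Phi,\Phi)/\scalar{\Phi}{\Phi}$, the $n$-th eigenvalue of each problem is obtained by minimizing over $n$-dimensional subspaces of $\D_U$ (respectively $S^N_U$) the maximal Rayleigh quotient attained on that subspace. Since $S^N_U \subseteq \D_U$ is only an \emph{approximate} subspace of the form domain --- the boundary condition $P_{U^N}\gamma(\Phi)=0$ differs from $P_U\gamma(\Phi)=0$, and $Q_{U^N}$ uses $A_{U^N}$ rather than $A_U$ --- I expect both an upper and a lower bound to require separate arguments.

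First I would prove the upper bound $\limsup_N \lambda^N_n \le \lambda_n$. The idea is to take an optimal (or near-optimal) $n$-dimensional subspace $V \subseteq \D_U$ realizing $\lambda_n$, spanned by eigenfunctions $\Phi_1,\dots,\Phi_n$, and to produce trial functions in $S^N_U$ by applying the finite-element projection $\Pi^N$ and then correcting so that the approximate boundary condition $P_{U^N}\gamma(\cdot)=0$ holds exactly. By assumption~\eqref{eq:h1aprox} we have $\norm{(\Pi^N-\1)\Phi_j}_1 \to 0$, and by Assumption~\ref{ass:1}\eqref{en:ass1i} the projection $P_{U^N}$ converges to $P_U$ in the $\H^{1/2}$ norm; since $P_U\gamma(\Phi_j)=0$ already, the boundary correction needed is small in $\H^{1/2}(\pO)$, and via the continuous bijection $\gamma_b$ of Prop.~\ref{prop:boundarymap} it can be lifted to a small $\H^1(\Omega)$ correction. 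One then checks that on the resulting subspace the quadratic form $Q_{U^N}$ is close to $Q_U$: the Dirichlet term $\scalar{\d\Phi}{\d\Psi}$ converges by the $\H^1$ convergence, while the boundary term converges using Assumption~\ref{ass:1}\eqref{en:ass1ii} together with the $\H^{1/2}$-boundedness of $A_U$ guaranteed by admissibility (Def.~\ref{def:admissible}). This yields an $n$-dimensional trial space in $S^N_U$ on which the maximal Rayleigh quotient exceeds $\lambda_n$ by a vanishing amount, giving the upper bound.

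The lower bound $\liminf_N \lambda^N_n \ge \lambda_n$ is where I expect the main obstacle, precisely because $S^N_U$ is not contained in $\D_U$ (the discrete functions satisfy only the approximate boundary condition). The standard strategy is to take, for each $N$, an optimal $n$-dimensional subspace $V_N \subseteq S^N_U$ realizing $\lambda^N_n$, and to map it into $\D_U$ by a near-isometry that changes the Rayleigh quotient by a controlled amount. Concretely, I would modify each $\Phi \in V_N$ near the boundary so as to enforce the exact condition $P_U\gamma(\Phi)=0$, again using $P_{U^N}\to P_U$ and the lifting through $\gamma_b$ to keep the correction small in $\H^1$; uniform $\H^1$ bounds on normalized near-minimizers (which follow from semiboundedness of $Q_U$ and the convergence of the forms) ensure the correction is uniformly small. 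The delicate point is that the convergence in Assumption~\ref{ass:1}\eqref{en:ass1ii} is only \emph{weak} (tested against arbitrary $\Psi\in\L^2(\pO)$, with $\varphi \in \H^{1/2}$ fixed), so controlling the boundary term $\scalarb{\gamma(\Phi)}{A_{U^N}\gamma(\Phi)}$ uniformly over the moving subspace $V_N$ requires care; I would handle this by extracting a subspace that converges (after passing to a subsequence) and using compactness of the trace embedding $\H^1(\Omega)\hookrightarrow\L^2(\pO)$ to upgrade the weak boundary convergence to something strong enough to pass to the limit in the Rayleigh quotient. Combining the two bounds gives $\lambda^N_n \to \lambda_n$, completing the proof.
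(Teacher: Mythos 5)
Your proposal is correct in outline, and its core machinery coincides with the paper's: the paper also argues via the min-max principle, and its trial function $\widetilde{\Phi^N}=\gamma^{-1}_b(\1-P_{U^N})\gamma_b\pi_b\Pi^N\Phi+\pi_0\Pi^N\Phi$ is exactly the ``apply $\Pi^N$, then correct the boundary condition through the bijection $\gamma_b$'' construction of your upper bound (the reverse direction is dispatched in the paper with one sentence, ``similar arguments''). Where you genuinely diverge is in handling the discrepancy between $Q_{U^N}$ and $Q_U$. The paper factors the comparison through an intermediate eigenvalue $\widetilde{\lambda_N}$, that of the \emph{exact} form $Q_U$ restricted to the \emph{approximate} domain $S^N_U$: the first step ($|\widetilde{\lambda_N}-\lambda|\to 0$) is then a pure domain-approximation estimate, and the second step ($|\lambda_N-\widetilde{\lambda_N}|\to 0$) isolates the form error as $|\scalarb{\varphi}{(A_U-A_{U^N})\varphi}|\leq \norm{A_U-A_{U^N}}_{L(S_U^N)}\,K\norm{\Phi}_1^2$ and kills it with two devices absent from your plan: finite-dimensionality of the trace spaces $s^N$, used to upgrade the weak convergence of Assumption~\ref{ass:1}(ii) to operator-norm smallness on $S^N_U$, and the inclusion $S^N_{\mathrm{Dirichlet}}\subset S^N_U$, which bounds the min-max of $\norm{\Phi}_1^2/\norm{\Phi}^2$ uniformly in $N$ by $K(\lambda^D+1)$ with $\lambda^D$ the $n$-th Dirichlet eigenvalue. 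You instead bound $\lambda_N$ against $\lambda$ directly from both sides and control the moving boundary term by compactness of the trace embedding plus subsequence extraction. That route can be made rigorous, but be aware it needs two ingredients you leave implicit: self-adjointness of the partial Cayley transforms $A_{U^N}$ (to move the operator onto the fixed vector when testing against $\varphi_N-\varphi$), and a uniform bound $\sup_N\norm{A_{U^N}}_{\H^{1/2}(\pO)\to\L^2(\pO)}<\infty$, which Assumption~\ref{ass:1}(ii) yields only after a Banach--Steinhaus argument; moreover the compact embedding gives $\L^2(\pO)$-precompactness of the traces, not $\H^{1/2}(\pO)$-precompactness, so the ``strong convergence is uniform on compacta'' mechanism must be run at the $\L^2$ level with those uniform bounds in hand. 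In exchange, your direct two-sided argument makes the role of each hypothesis more transparent and is closer to the standard FEM eigenvalue-convergence template, whereas the paper's intermediate-eigenvalue factorisation buys a cleaner separation of domain error from form error, avoids subsequences, and replaces your compactness step by the elementary finite-dimensional and Dirichlet-comparison observations.
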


\begin{proof}
	Let $V_n$ and $V^N_n$ be the spaces
	\begin{align}
	    V_n=\{\Phi\in\D_U\bigr|\scalar{\Phi}{\xi_i}=0\,,\xi_i\in\L^2(\Omega)\,,i=1,\dots,n\}\;,\phantom{\sum_{N}}\\
	    V^N_n=\{\Phi_N\in S^N_U \bigr|\scalar{\Phi_N}{\xi_i}=0\,,\xi_i\in\L^2(\Omega)\,,i=1,\dots,n\}\;.
	\end{align}
Then, applying the \textrm{min-max} Principle, cf. \cite[Thm.~XIII.2]{reed78}, to the quadratic form $Q_U$ of Def.~\ref{DefQU} on the domains of the weak and the approximate spectral problems, and subtracting them, we get:
\begin{equation}\label{eq:minmax}
\sup_{\xi_1,...,\xi_{n-1}}\left[\inf_{\Phi_N\in V^N_{n-1}} \frac{Q_U(\Phi _N)}{\norm{\Phi _N}^2}-\inf_{\Phi\in V_{n-1}} \frac{Q_U(\Phi)}{\norm{\Phi}^2}\right]=\widetilde{\lambda^N}-\lambda\;.
\end{equation}
Where $\widetilde{\lambda^N}$ is the $n$-th eigenvalue of the finite-dimensional problem defined by the quadratic form $Q_U$ with domain $S^N_U$ and $\lambda$ is the $n$-th eigenvalue of the weak spectral problem. Notice that $\widetilde{\lambda^N}$ is not an eigenvalue of the approximate problem \eqref{approximatespectralproblem} since for the moment we are considering only the quadratic form $Q_U$ and not $Q_{U^N}$. Let $\normm{\cdot}^2:=Q_U(\cdot) + (m+1)\norm{\cdot}^2$ be the graph-norm of the quadratic form $Q_U$, where $m$ is its lower bound, cf.\ Thm.~\ref{thm:char_U}. By adding and subtracting $(m+1)\norm{\Phi}^2$, forgetting the supremum for the moment, and considering that all the functions are normalised in $\L^2(\Omega)$, we get that the left hand side of Eq.~\eqref{eq:minmax} verifies for $\Phi\in V_{n-1}\subset\D_U$:
\begin{multline}
	\inf_{\Phi_N\in V^N_{n-1}}\normm{\Phi_N}^2-\inf_{\tilde{\Phi}\in V_{n-1}}\normm{\tilde{\Phi}}^2
		\leq\normm{\Phi}^2 -\inf_{\tilde{\Phi}\in V_{n-1}}\normm{\tilde{\Phi}}^2
		\\+ \inf_{\Phi_N\in V^N_{n-1}}\left( \normm{\Phi-\Phi_N}^2 + 2\normm{\Phi}\normm{\Phi-\Phi_N} \right)\,,
\end{multline}
Now, for all $\epsilon>0$ we can select $\Phi\in V_{n-1}$ such that
$$\left| \normm{\Phi}^2 -\inf_{\tilde{\Phi}\in V_{n-1}}\normm{\tilde{\Phi}}^2\right| <\epsilon\;.$$ Therefore,

\begin{equation}
	\inf_{\Phi_N\in V^N_{n-1}}\normm{\Phi_N}^2-\inf_{\tilde{\Phi}\in V_{n-1}}\normm{\tilde{\Phi}}^2
		\leq \epsilon + \inf_{\Phi_N\in V^N_{n-1}}\left( \normm{\Phi-\Phi_N}^2 + 2\normm{\Phi}\normm{\Phi-\Phi_N} \right)\;.
\end{equation}
Since the graph norm of the quadratic form is dominated by the Sobolev norm $\norm{\cdot}_1$, in order to bound the right hand side it is enough to show that for a fixed $\Phi\in\D_U$
$$\inf_{\Phi_N\in S^N_U}\norm{\Phi-\Phi_N}_1\stackrel{N\to\infty}{\longrightarrow}0\;.$$
From Eq.~\eqref{eq:h1aprox}, there exists for every $\Phi\in\D_U\subset\H^1$ and every $\epsilon>0$ an $N_0$ such that for all $N>N_0$ we have that
\begin{equation}\label{eq:conv.eign.aprox.h1}
	\norm{\Phi-\Pi^N\Phi}_{1}^2\leq\epsilon\;.
\end{equation}
Notice that $\Pi^N\Phi$ is not an element of $S^N_U$ in general\,. Using the decomposition of Def.~\ref{def:tracemap} and Prop.~\ref{prop:boundarymap} we can define
$$\widetilde{\Phi^N}:=\gamma^{-1}_b(\1-P_{U^N})\gamma_b\pi_b\Pi^N\Phi+\pi_0\Pi^N\Phi\;.$$
It is easy to check that $\widetilde{\Phi^N} \in S^N_U$. Now, we get: 
\begin{align*}
	\norm{\Phi-\widetilde{\Phi^N}}_1^2&=\norm{\pi_0\Phi-\pi_0\widetilde{\Phi^N}}_1^2+\norm{\pi_b\gamma^{-1}_b(\1-P_{U^N})\gamma_b\pi_b\Pi^N\Phi-\pi_b\Phi}_1^2\\
		&\leq \norm{\pi_0(\1-\Pi^N)\Phi}_1^2 + K\norm{(\1-P_{U^N})\gamma_b\pi_b\Pi^N\Phi-\gamma(\pi_b\Phi)}_{\H^{1/2}(\pO)}^2\\
		&\leq \norm{\pi_0(\1-\Pi^N)\Phi}_1^2 + K\norm{(\1-P_{U^N})\left(\gamma_b\pi_b\Pi^N\Phi-\gamma_b\pi_b\Phi\right)}_{\H^{1/2}(\pO)}^2\\
		&\phantom{aaaaaaaaaaa} + K\norm{(P_{U^N}-P_{U})\gamma_b\pi_b\Phi}_{\H^{1/2}(\pO)}^2\;,
\end{align*}
where $K$ is the Lipschitz constant of the bijection $\gamma_b$. The right hand side can be chosen as small as needed by means of Eq.~\eqref{eq:conv.eign.aprox.h1} and Asumption~\ref{ass:1}. Similar arguments lead to a similar bound for the reverse difference in Eq.~\eqref{eq:minmax}, and hence we have proved that $$|\widetilde{\lambda_N}-\lambda|\to0\;.$$

Finally, we need to show that the $n$-th eigenvalue of the finite-dimensional problems $Q_U$ and $Q_{U^N}$, both with domain $S^N_U$ converge, i.e.\ $|\lambda_N-\widetilde{\lambda_N}|\to0$\;. We shall use the min-max principle again. Notice that
\begin{equation}
	\frac{Q_U(\Phi)}{\norm{\Phi}^2} - \frac{Q_{U^N}(\Phi)}{\norm{\Phi}^2}=-\frac{\scalar{\varphi}{(A_U-A_{U^N})\varphi}}{\norm{\Phi}^2}\leq K\norm{A_U-A_{U^N}}_{L(S_U^N)}\frac{\norm{\Phi}^2_1}{\norm{\Phi}^2}\;.
\end{equation}
If we apply the min-max Principle to both sides, we get that 
\begin{equation}\label{eq:convergenceeigenvalues}
	|\widetilde{\lambda_N}-\lambda_N| \leq K'\norm{A_U-A_{U^N}}_{L(S_U^N)}\sup_{\xi_1,...,\xi_{n-1}}\inf_{\Phi_N\in V^N_{n-1}}\frac{\norm{\Phi}^2_1}{\norm{\Phi}^2}\;.
\end{equation}
Now notice that $\norm{\Phi}_{1}^2$ is proportional to the quadratic form associated to the Dirichlet extension of the Laplace-Beltrami operator. Since $S^N_{\mathrm{Dirichlet}}\subset S_U^N$, we have that
$$\sup_{\xi_1,...,\xi_{n-1}}\inf_{\Phi_N\in V^N_{n-1}}\frac{\norm{\Phi}^2_1}{\norm{\Phi}^2}\leq K(\lambda^D+1)\;,$$
where $\lambda^D$ is the $n$-th eigenvalue of the Dirichlet extension of the Laplace-Beltrami operator on the manifold $\Omega$\,. Finally, notice that the factor $\norm{A_U-A_{U^N}}_{L(S_U^N)}$ is the operator norm over a finite-dimensional subspace of $\L^2(\pO)$\,. Hence, the weak convergence condition in Assumption \ref{ass:1} guarantees that the right hand side of Eq. \eqref{eq:convergenceeigenvalues} can be chosen as small as needed.
\end{proof}
\medskip

\begin{lemma}
	Let $\{S_U^N\}_N$ be an approximating family and let $\{\Phi^N\}_N$ be a sequence of eigenfunctions associated to the sequence $\{\lambda_N\}_N$ of Lemma~\ref{lem:conveigen}. Let $\normm{\cdot}$ be the graph norm of the quadratic form $Q_U$. Then, by passing to a subsequence if necessary, there exists $\Phi\in\overline{\D_U}^{\normm{\cdot}}$ such that for any $\Psi\in\overline{\D_U}^{\normm{\cdot}}$
	$$Q_U(\Psi,\Phi-\Phi^N)\stackrel{N\to\infty}{\longrightarrow}0\;,$$ and such that for all $\Psi\in\overline{\D_U}^{\normm{\cdot}}$
	$$Q(\Psi,\Phi)=\lambda\scalar{\Psi}{\Phi}\;.$$
\end{lemma}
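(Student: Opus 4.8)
The plan is to run a standard Galerkin convergence argument, whose two crucial ingredients are an \emph{a priori} bound for the eigenfunctions in $\H^1(\Omega)$ and the compactness of the trace map $\gamma\colon\H^1(\Omega)\to\L^2(\pO)$, which upgrades weak $\H^1$ convergence to strong convergence of the boundary values. Normalising $\norm{\Phi^N}=1$ in $\L^2(\Omega)$ we have $Q_{U^N}(\Phi^N)=\lambda_N$, and by Lemma~\ref{lem:conveigen} the $\lambda_N$ are bounded. To bound $\{\Phi^N\}_N$ in $\H^1(\Omega)$, I would write $\norm{\d\Phi^N}^2=\lambda_N+\scalarb{\gamma(\Phi^N)}{A_{U^N}\gamma(\Phi^N)}$ and use the trace inequality $\norm{\gamma(\Phi)}_{\L^2(\pO)}^2\leq\varepsilon\norm{\d\Phi}^2+C_\varepsilon\norm{\Phi}^2$ together with the $\L^2(\pO)$-boundedness of the partial Cayley transforms to absorb the boundary term into the Dirichlet energy, obtaining $\norm{\Phi^N}_1\leq C$ uniformly. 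Reflexivity of $\H^1(\Omega)$ then yields a subsequence with $\Phi^N\rightharpoonup\Phi$ weakly in $\H^1(\Omega)$; Rellich--Kondrachov gives $\Phi^N\to\Phi$ strongly in $\L^2(\Omega)$, and compactness of the trace gives $\gamma(\Phi^N)\to\gamma(\Phi)$ strongly in $\L^2(\pO)$. To place $\Phi$ in $\D_U\subset\overline{\D_U}^{\normm{\cdot}}$ I decompose $P_U\gamma(\Phi)=(P_U-P_{U^N})\gamma(\Phi)+P_{U^N}\bigl(\gamma(\Phi)-\gamma(\Phi^N)\bigr)$, using $P_{U^N}\gamma(\Phi^N)=0$; the first term vanishes by Assumption~\ref{ass:1}(\ref{en:ass1i}) and the second because $P_{U^N}$ is an $\L^2(\pO)$-contraction and the traces converge strongly there, so $P_U\gamma(\Phi)=0$.

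For the first assertion I would show that $\Phi^N\rightharpoonup\Phi$ also with respect to the graph inner product. Since $Q_U(\Psi,\Phi^N)=\scalar{\d\Psi}{\d\Phi^N}-\scalarb{\gamma(\Psi)}{A_U\gamma(\Phi^N)}$, the bulk summand converges because $\d\Phi^N\rightharpoonup\d\Phi$ weakly in $\L^2(\Omega)$, and the boundary summand converges because $\gamma(\Phi^N)\to\gamma(\Phi)$ strongly in $\L^2(\pO)$ and $A_U$ is bounded there. Together with $\scalar{\Psi}{\Phi^N}\to\scalar{\Psi}{\Phi}$ this gives $Q_U(\Psi,\Phi-\Phi^N)\to0$, first for $\Psi\in\D_U$ and then, using the uniform bound on $\normm{\Phi-\Phi^N}$ and the density of $\D_U$ in the graph norm, for every $\Psi\in\overline{\D_U}^{\normm{\cdot}}$.

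For the second assertion I would pass to the limit in the approximate weak problem \eqref{approximatespectralproblem}. Given $\Psi\in\D_U$, I insert the interpolant $\widetilde{\Psi^N}:=\gamma_b^{-1}(\1-P_{U^N})\gamma_b\pi_b\Pi^N\Psi+\pi_0\Pi^N\Psi\in S^N_U$ of Lemma~\ref{lem:conveigen}, which converges to $\Psi$ in $\H^1(\Omega)$, so that $Q_{U^N}(\widetilde{\Psi^N},\Phi^N)=\lambda_N\scalar{\widetilde{\Psi^N}}{\Phi^N}$. The right-hand side tends to $\lambda\scalar{\Psi}{\Phi}$, the bulk term $\scalar{\d\widetilde{\Psi^N}}{\d\Phi^N}$ converges to $\scalar{\d\Psi}{\d\Phi}$ (strong against weak), and for the boundary term I replace $\gamma(\widetilde{\Psi^N})$ by $\gamma(\Psi)$ via strong trace convergence and split $A_{U^N}\gamma(\Phi^N)-A_U\gamma(\Phi)=A_{U^N}\bigl(\gamma(\Phi^N)-\gamma(\Phi)\bigr)+(A_{U^N}-A_U)\gamma(\Phi)$: the first piece is controlled by the uniform $\L^2(\pO)$-bound of $A_{U^N}$ and strong trace convergence, and the second vanishes against $\gamma(\Psi)$ by Assumption~\ref{ass:1}(\ref{en:ass1ii}), since $\gamma(\Phi)\in\H^{1/2}(\pO)$. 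This yields $Q_U(\Psi,\Phi)=\lambda\scalar{\Psi}{\Phi}$ for $\Psi\in\D_U$, and a density argument in $\normm{\cdot}$ extends the identity to the whole form domain, giving it for the closure $Q$.

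I expect the main obstacle to be the uniform \emph{a priori} $\H^1$-bound: it rests on a lower bound for the forms $Q_{U^N}$ that is independent of $N$, equivalently a uniform $\L^2(\pO)$-bound for the operators $A_{U^N}$ (a uniform spectral gap for the $U^N$), without which the boundary term cannot be absorbed into the Dirichlet energy. The secondary difficulty is the $N$-dependence of the boundary operators when passing to the limit, which is exactly what the compactness of the trace and the two convergences of Assumption~\ref{ass:1} are designed to handle.
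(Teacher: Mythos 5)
Your proof is correct, but it follows a genuinely different and considerably more detailed route than the paper's. The paper's own argument is essentially two lines: it normalises $\norm{\Phi^N}_1=1$ directly in $\H^1(\Omega)$, so that no a priori estimate is needed, extracts a weakly convergent subsequence by Banach--Alaoglu, and observes that $\H^1$-weak convergence already yields $Q_U(\Psi,\Phi-\Phi^{N_j})\to 0$ because the graph norm is dominated by $\norm{\cdot}_1$; the eigenvalue identity is then dispatched using $|\lambda_N-\lambda|\to 0$ and the bound $\left|Q_U(\Psi,\Phi)-Q_{U^N}(\Psi,\Phi)\right|\leq K\,|\scalar{\psi}{(A_U-A_{U^N})\varphi}|$ together with Assumption~\ref{ass:1}. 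By normalising in $\L^2(\Omega)$ instead, you must earn the compactness through the a priori $\H^1$ bound, and this is where the two routes truly part ways: your absorption argument needs uniform control of the family $\{A_{U^N}\}$, which Assumption~\ref{ass:1} as stated does not literally provide --- you flag this correctly. The caveat is harmless, though: Banach--Steinhaus applied to the weak convergence in Assumption~\ref{ass:1} yields $\sup_N\norm{A_{U^N}}_{\H^{1/2}\to\L^2}<\infty$, which suffices to close the absorption step via the $\varepsilon$-trace inequality, and in the paper's concrete realisation $A_{U^N}=K^NA_UK^N$ and $P_{U^N}=K^NP_UK^N$ with $K^N$ orthogonal projectors, so the uniform $\L^2(\pO)$ bounds --- including the contraction property of $P_{U^N}$ that you invoke --- do hold; at the abstract level that contraction property is an additional, mild hypothesis. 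In exchange, your version buys three things the paper's proof glosses over: the verification $P_U\gamma(\Phi)=0$, i.e.\ that the limit genuinely lies in $\overline{\D_U}^{\normm{\cdot}}$, which is not automatic since the $\Phi^N$ satisfy only $P_{U^N}\gamma(\Phi^N)=0$; the nontriviality $\norm{\Phi}_{\L^2}=1$ via Rellich--Kondrachov, which the paper's $\H^1$ normalisation does not by itself ensure; and the explicit passage to the limit in \eqref{approximatespectralproblem} through the interpolant $\widetilde{\Psi^N}$, which makes precise the handling of $N$-dependent test functions that the paper compresses into ``straightforward''. One micro-repair if you want to avoid even the uniform $\L^2(\pO)$ bound on $A_{U^N}$ in the term $A_{U^N}\bigl(\gamma(\Phi^N)-\gamma(\Phi)\bigr)$: use the self-adjointness of $A_{U^N}$ to move it to the test side and pair $A_{U^N}\gamma(\widetilde{\Psi^N})$, which is $\L^2(\pO)$-bounded by the uniform $\H^{1/2}\to\L^2$ estimate, against the strongly $\L^2(\pO)$-convergent traces.
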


\begin{proof}
	We can assume that $\norm{\Phi^N}_1=1$\,. By the Banach-Alaoglu theorem and the fact that the Sobolev norm of order 1 dominates the graph-norm of the quadratic form, there exist subsequences $\{\Phi^{N_j}\}$ and accumulations points $\Phi$ such that for any $\Psi\in\overline{\D_U}^{\normm{\cdot}}$ it holds that
	$$Q_U(\Psi,\Phi-\Phi^N_j)\stackrel{N_j\to\infty}{\longrightarrow}0\;.$$
	Proving that $\Phi$ is a solution of the weak eigenvalue problem is straightforward using that weak convergence in $\overline{\D_U}^{\normm{\cdot}}$ implies weak convergence in $\L^2(\Omega)$, that $|\lambda_N-\lambda|\to0$ and that 
	$$\left|Q_U(\Psi,\Phi)-Q_{U^N}(\Psi,\Phi)\right|\leq K |\scalar{\psi}{(A_U-A_{U^N})\varphi}|\;,$$
	whose right hand side tends to zero by Assumption~\ref{ass:1}.
\end{proof}


\section{Finite element method approximation of the spectral problem}\label{sec:FEM}

The finite element model that we shall use has to be devised in order to satisfy Eq.~\eqref{eq:h1aprox}. Well known results in finite elements theory establish that piecewise linear continuous functions satisfy the convergence condition of Eq.\ \eqref{eq:conv.eign.aprox.h1} discussed in the previous section, cf.\ \cite{brenner08}. The construction of the algorithm will be split into two main parts, one considering the construction of the finite elements at the bulk, and other considering the finite elements at the boundary.
The bulk part will be a standard FEM using a uniform triangulation. Implementing the general boundary conditions, which is the main objective of this article, does not depend on the shape of the manifold $\Omega$ since the boundary will be a compact and connected manifold. Hence, we will consider the simple situation depicted in Fig.~\ref{Reticulo_ext}.  

\begin{figure}[h]
\centering
\includegraphics{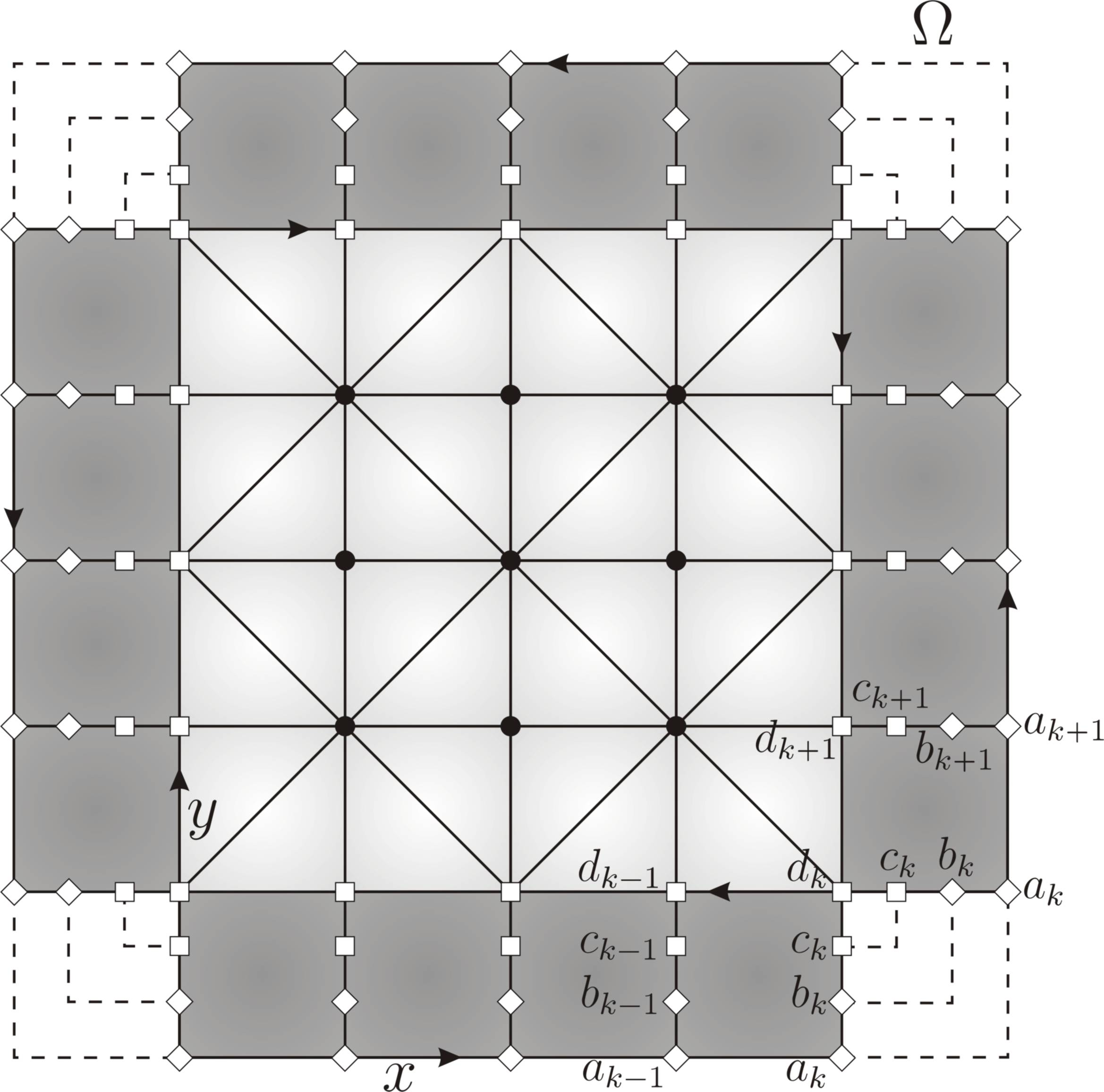}
\caption{\small Triangularisation of the problem for $n=m=5$. White region corresponds to the triangularised part at the bulk and the dark squares to the boundary. Regularisation at the corners is achieved by identification of the nodes at each corner. Bulk nodes are depicted in black. There are two types of nodes at the finite elements of the boundary. The interior nodes are problem nodes and are denoted with squares. The exterior nodes, called \emph{boundary nodes} are depicted with rhombi.}
\label{Reticulo_ext}
\end{figure}

As the region $\Omega$ we consider the square $[0,1]\times[0,1]$. We denote by $n$ and $m$ the number of node points in which the grid is divided, where $n$ denotes the number of nodes on the horizontal axis and $m$ the number of nodes on the vertical one. At the bulk we apply a standard FEM where the base-functions have value one at one node and zero at the rest. At the boundary, the finite elements will be squares and the base-functions will depend on the boundary conditions of the problem under consideration. The reason behind the choice of square elements at the boundary will become clear when dealing with the construction of the finite element models at the boundary. The regularisation procedure that we consider at the corners of the manifold $\Omega$ is the identification of the sides of the squares at the corners as shown in Fig.~\ref{Reticulo_ext}. Implicitly we are considering that the boundary is connected and that there are no preferred points.

We will consider for simplicity that the Riemannian metric is flat. The construction of the FEM at the bulk is in essence unaffected by this choice. Only the computation of the different scalar products, cf. Eqs.\ \eqref{eq:matricesM_B_F}, would be affected by the introduction of appropriate weights. On the other hand, the implementation of the boundary conditions should be modified slightly in order to consider non-flat metrics. In Section \ref{FEM:Boundary} we will point out which particular equations should be modified in order to implement the different boundary conditions in a non-flat and non-uniform case.

Let us reformulate the problem of Def.~\ref{DefQU} as it is usually done in FEM. First, we write the solution function $\Phi$ and the function $\Psi$ in terms of base-functions:
\begin{equation}
\Phi=\sum_{i=1}^{N}u_i\phi_i,\quad\qquad\Psi=\sum_{i=1}^{N}v_i\phi_i,
\end{equation}
where $N$ is the total number of base-functions. Hence, \eqref{approximatespectralproblem} becomes:
\begin{equation}
\sum_{i,j=1}^N\left[u_i\left(M_{ij}-F_{ij}\right)v_j\right]=\lambda\sum_{i,j=1}^N u_i B_{ij}v_j,\qquad\forall v_j,
\end{equation}
where the matrices $M$, $B$ and $F$ are defined as:
\begin{subequations}\label{eq:matricesM_B_F}
\begin{equation}\label{matrices_M_B}
M_{ij}=\int_\Omega\left(\phi_{ix}\phi_{jx}+\phi_{iy}\phi_{jy}\right)\mathrm{d}\mu_{\eta},\qquad B_{ij}=\int_{\Omega}\phi_i\phi_j\mathrm{d}\mu_{\eta},
\end{equation}
and
\begin{equation}\label{matrix_F}
F_{ij}=\int_{\partial\Omega}\left(\phi_{ix}n_1+\phi_{iy}n_2\right)\phi_j\mathrm{d}\mu_{\partial\eta},
\end{equation}
\end{subequations}
where the subindexes $x$ and $y$ denote the partial derivatives with respect to $x$ and $y$ respectively, and $\textbf{n}=(n_1,n_2)$ is the normal vector to the boundary. 
We are going to treat essential and natural boundary conditions in a unified way. Therefore, the matrix $F$ represents the boundary term $\scalar{\varphi}{\dot{\varphi}}_{\pO}$ coming from the integration by parts formula. We explain the reasons behind this choice in Section~\ref{FEM:Boundary}. The problem that we solve numerically becomes then:
\begin{equation}\label{eq:geneigenvprob}
(M-F)\textbf{u}=\lambda B\textbf{u},
\end{equation}
with $\textbf{u}=(u_1,u_2,\ldots,u_N)$.


\subsection{Finite elements at the bulk}

The structure of the triangularisation in Fig. \ref{Reticulo_ext} makes it simple to compute the contributions of the finite elements at the bulk. As we are interested in showing the performance of the algorithm based on the boundary elements, we will consider just linear base-functions $\phi_i$, $i=1,\ldots,N_{Bulk}$ at the bulk, leaving higher order finite elements for future implementations. The base-function corresponding to the $i$-th node has value $1$ in that node and zero at the rest is:
\begin{equation}\label{def_base_func}
\phi_i(\mbox{node}_j)=\left\{\begin{matrix}
1& i=j\\
0&i\neq 0
\end{matrix}\right.\,.
\end{equation}

\begin{figure}[h]
    \centering%
    \begin{subfigure}[t]{0.3\textwidth}
        \centering%
         \raisebox{8pt}[0pt][0pt]{\includegraphics[height = 2.5cm]{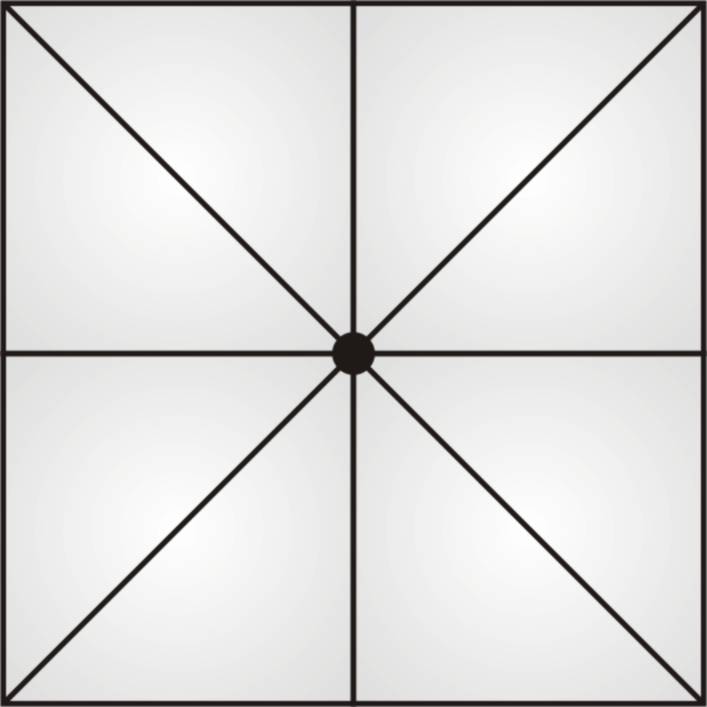}}%
        \caption{Star-like}\label{fig:starnode}%
    \end{subfigure}%
    \begin{subfigure}[t]{0.3\textwidth}
        \centering%
        \raisebox{8pt}[0pt][0pt]{\includegraphics[height = 2.5cm]{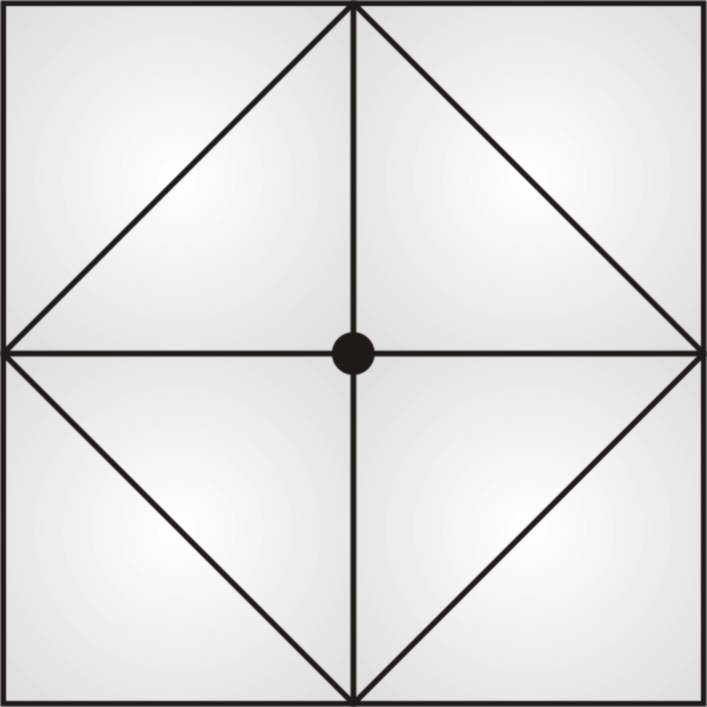}}%
        \caption{Diamond-like}\label{fig:diamondnode}%
    \end{subfigure}%
    \begin{subfigure}[t]{0.35\textwidth}
        \centering%
        {\includegraphics[height=3cm]{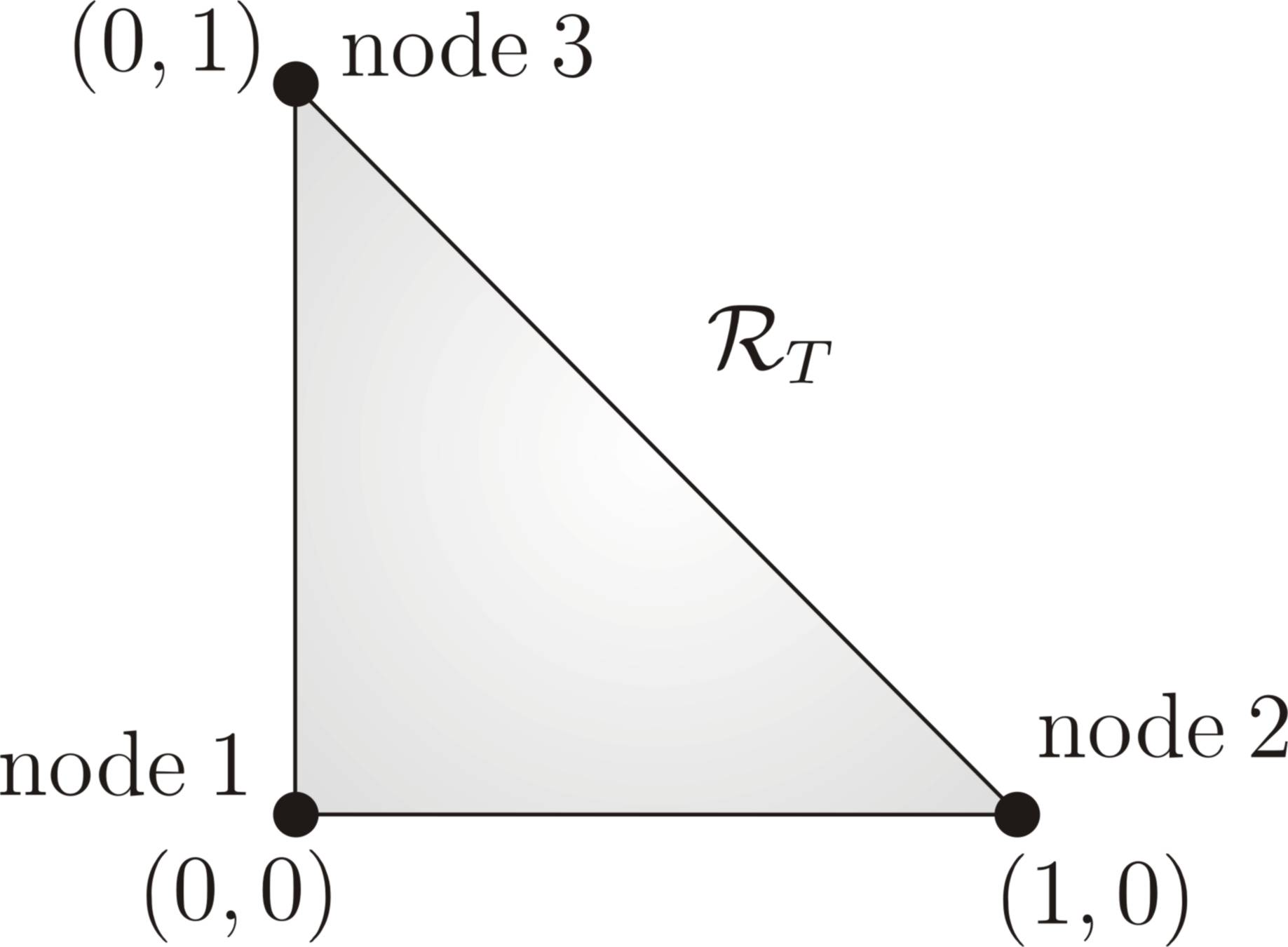}}%
        \caption{Reference triangle $\mathcal{R}_T$}\label{Ref_triang}%
    \end{subfigure}%
    \caption{\small The two different kind of nodes of the bulk and the reference triangle.}\label{two_nodes}
\end{figure}

In Fig. \ref{Reticulo_ext}, it can be seen that there are two kind of nodes at the bulk, the star-like and the diamond-like. Those are depicted respectively in Fig.~\ref{fig:starnode} and Fig.~\ref{fig:diamondnode}. 
Having this into account, it is easy to compute the matrix elements of $M$ and $B$ corresponding to the bulk.
We have chosen the reference triangle with coordinates $(\eta,\xi)$ showed in Fig. \ref{Ref_triang}.
The affine transformation that transforms the reference triangle $\mathcal{R}_T$ in any triangle with vertices $(x_1,y_1)$, $(x_2,y_2)$ and $(x_3,y_3)$ is the following:
\begin{equation}\label{trans_reference_tri}
\begin{pmatrix}
x\\
y
\end{pmatrix}=\begin{pmatrix}
x_1\\
y_1
\end{pmatrix}+\begin{pmatrix}
x_2-x_1&x_3-x_1\\
y_2-y_1&y_3-y_1
\end{pmatrix}\begin{pmatrix}
\eta\\
\xi
\end{pmatrix}.
\end{equation}
Hence, the elements of the first derivatives of the coordinates in the reference triangle with respect to $x$ and $y$ are given by: 
\begin{equation}
\begin{pmatrix}
\eta_x& \eta_y\\
\xi_x&\xi_y
\end{pmatrix}=\frac{1}{J_T}\begin{pmatrix}
y_3-y_1&x_1-x_3\\
y_1-y_2&x_2-x_1
\end{pmatrix},
\end{equation}   
where the Jacobian is given by  $J_T=|(x_2-x_1)(y_3-y_1)-(x_1-x_3)(y_1-y_2)|$. 
Let us denote by $(\alpha_i, \beta_i, \gamma_i)$ the coefficients of the base-function $\phi_i$ restricted to a given triangle, that is
\begin{equation}\label{eq:basefunctionbulk}
	\phi_i|_T = \alpha_i + \beta_i x + \gamma_i y\;.
\end{equation}
Then, the results of the integrals \eqref{matrices_M_B} of the matrix elements of $M$ and $B$, corresponding to any triangle in our triangularisation, cf.\ Fig. \ref{Reticulo_ext}, are:
\begin{subequations}\label{Bulk_matrices}
\begin{equation}\label{M_ij_one_triangle}
\hspace{-0.35cm}\left.M_{ij}\right|_{T}\hspace{-0.03cm}=\hspace{-0.03cm}\frac{J_T}{2}\big[(\beta_i\eta_x+\gamma_i\xi_x)(\beta_j\eta_x+\gamma_j\xi_x)\\
\left.+(\beta_i\eta_y+\gamma_i\xi_y)(\beta_j\eta_y+\gamma_j\xi_y)\big]\right|_T,
\end{equation}
\begin{equation}\label{B_ij_one_triangle}
\hspace{-0.04cm}\left.B_{ij}\right|_{T}\hspace{-0.03cm}=\hspace{-0.03cm}\frac{J_T}{2}\left[\alpha_i\alpha_j\hspace{-0.05cm}+\hspace{-0.05cm}\frac{\alpha_i\beta_j\hspace{-0.05cm}+\hspace{-0.05cm}\beta_i\alpha_j}{3}\hspace{-0.05cm}+\hspace{-0.05cm}\frac{\alpha_i\gamma_j\hspace{-0.05cm}+\hspace{-0.05cm}\gamma_i\alpha_j}{3}\right.
\hspace{-0.1cm}\left.\left.+\frac{\beta_i\gamma_j\hspace{-0.05cm}+\hspace{-0.05cm}\gamma_i\beta_j}{12}\hspace{-0.05cm}+\hspace{-0.05cm}\frac{\beta_i\beta_j}{6}\hspace{-0.05cm}+\hspace{-0.05cm}\frac{\gamma_i\gamma_j}{6}\right]\right|_T\!\!.
\end{equation}
\end{subequations}
The matrix elements of $M$ and $B$ are computed by adding the latter results over all the triangles in the triangularisation:
\begin{equation}\label{Bulk_matrices_sum}
M_{ij}=\sum_{T}\left.M_{ij}\right|_{T}\,,\qquad B_{ij}=\sum_{T}\left.B_{ij}\right|_{T}.
\end{equation}
Because of the definition of the base-functions at the bulk, if $i\neq j$ the former additions restrict to the two triangles that share the edge between the nodes $i$ and $j$. 
If $i=j$, it is obvious that the support of the integrands of $M_{ii}$ and $B_{ii}$ is the same as the support of the base-function located at $i$, hence we will have two kinds of matrix elements depending if $i$ is a star-like or a diamond-like node.  In the former case the sum will restrict to eight triangles and in the latter to four, see Fig.~\ref{two_nodes}.


\subsection{Finite elements at the boundary}\label{FEM:Boundary}

As stated at the beginning of this section, piecewise linear continuous functions provide the appropriate convergence with respect to the Sobolev norms, cf.\ \cite{brenner08}. We will consider a finite element model that guarantees that the restriction to the boundary of the functions, as well as their normal derivatives, are piecewise linear continuous functions. This condition implies that the finite element model on the region at the boundary (dark region in Fig.\ \ref{Reticulo_ext}) cannot consist on linear polynomials. This is so because the normal derivative of a piecewise linear continuous function is a piecewise constant function. Notice that going to higher order polynomials does not spoil the convergence in the Sobolev norm of the FEM as long as continuity is preserved between finite elements.

A convenient way of imposing the boundary conditions is as follows: Instead of using the operators $P_{U^N}$ and $A_{U^N}$ to construct  the approximate problem directly, we will construct functions at the boundary that satisfy the boundary condition
\begin{equation}\label{eq:approxbc}
	\varphi - i \dot{\varphi}= U^N (\varphi + i \dot{\varphi})\;,
\end{equation}
where $P_{U^N}$ is the orthogonal projector onto the proper space of $U^N$ associated to the eigenvalue $\{-1\}$, $A_{U^N}$ is the partial Cayley transform of Def.~\ref{partialCayley}, $\varphi = \gamma(\Phi)$ is the restriction of $\Phi$ to the boundary, and $\dot{\varphi} = \gamma(\frac{\d\Phi}{\d\mathbf{n}})$ is the restriction of the normal derivative. So far there has not been any assumption on the flatness of the Riemannian metric. The following construction can be applied also to the non-flat case. One has then to take into account that the normal derivative would me be measured with respect to the Riemannian metric and that the measures along the boundary need to be modified accordingly.

Notice that Eq.~\eqref{eq:approxbc} splits into two equations, one on the proper subspace of $U^N$ associated to $\{-1\}$ and one on its orthogonal complement:
\begin{subequations}
\begin{align}
	P_{U^N}\varphi - i P_{U^N}\dot{\varphi} = - (P_{U^N}\varphi + i P_{U^N}\dot{\varphi} ) \quad &\!\!\!\Leftrightarrow\!\!\! \quad P_{U^N}\varphi = 0\,,\\
	P_{U^N}^\bot\varphi - i P_{U^N}^\bot\dot{\varphi} = U^N\bigr|_{\mathrm{ran}P_{U^N}^\bot} (P_{U^N}^\bot\varphi + i P_{U^N}^\bot\dot{\varphi} ) \quad &\!\!\!\Leftrightarrow\!\!\! \quad P_{U^N}^\bot\dot{\varphi} = iP_{U}^\bot\frac{ (U-\mathbb{I})}{ U+\mathbb{I}}P_{U^N}^\bot\varphi\,.
\end{align}
\end{subequations}
That is, the problem defined by 
	$$\scalar{\d\Phi}{\d\Phi} - \scalar{\varphi}{\dot{\varphi}} = \lambda \scalar{\Phi}{\Phi}\,,$$ 
such that $\Phi$ verifies the boundary condition in Eq.~\eqref{eq:approxbc}, is equivalent to the approximate problem of Eq.~\eqref{approximatespectralproblem}. Notice that there is a one-to-one correspondence between unitary operators $U^N$ and the operators $P_{U^N}$ and $A_{U^N}$. 

The finite element model that we will consider at the boundary shall be constructed in such a way that the restrictions to the boundary of the base-functions are piecewise linear continuous functions. That is, for the interval $I_j = [x_j,x_j+h]$ each boundary function will have the following restrictions to the boundary: 
\begin{subequations}\label{eq:boundarylinear}
    \begin{align}
        L^j(x) &= \frac{1}{h}(a_{j+1} - a_j) x - \frac{1}{h} (a_{j+1} - a_j) x_j + a_j\,,\\
        \dot{L}^j(x) &= \frac{1}{h}(n_{j+1} - n_j) x - \frac{1}{h} (n_{j+1} - n_j) x_j + n_j\,,
    \end{align}
\end{subequations}
where, as stated above, the interval length $h$ is assumed to be the same at all the intervals.
The former function $L^j(x)$ is the restriction of the base-function to the interval $I_j$, where $a_j$ and $a_{j+1}$ are the values of the base-function at the endpoints of the interval $I_j$\,. The latter are determined by $x_j$ and $x_{j+1}= x_j+h$ respectively. Similarly, $\dot{L}^j(x)$ is the linear function at the interval $I_j$ corresponding to the restriction to the boundary of the normal derivative of the base-function. In this case, $n_j$ and $n_{j+1}$ are the values of the normal derivatives at the nodes $x_j$ and $x_{j+1}$ respectively.

In order to represent the unitary operator $U^N$, we will choose an orthonormal basis given in terms of the Legendre polynomials of order $0$ and $1$ defined over each interval. Assume that there are $N_S$ different intervals at the boundary, i.e. $N_S$ different squares, and consider the basis given by
\begin{equation}\label{eq:legendrep}
	P_0^j(x)=\frac{1}{\sqrt{h}}\,,\;\; P^j_1(x)=\frac{2\sqrt{3}}{\sqrt{h^3}}(x-x_j-\frac{h}{2})\,,\;\; x \in [x_j,x_j + h]\,,\;\; 1\leq j\leq N_S\,.
\end{equation}
These functions are extended by zero to the rest of the boundary, i.e. $P_0^j(x) = P^j_1(x) = 0$ for $x\notin I_j$.
In this basis, the boundary condition of Eq.~\eqref{eq:approxbc} will take the form
\begin{equation}\label{eq:legendrebc}
    \xi_k - i \zeta_k = U_{kl} (\xi_l + i \zeta_l)\,,\qquad k,l=1,\ldots,2N_S\,,
\end{equation}
where $ \xi_k$ and $\zeta_k$ are the coefficients of the Legendre expansion and are related to the linear functions of Eqs.~\eqref{eq:boundarylinear} by
\begin{subequations}\label{eq:legendrecoefficients}
    \begin{alignat}{2}
        \xi_j = \scalar{P_0^j}{L^j}_{I_j} &= \int_{x_j}^{x_j+h} P_0^j(x) L^j(x)\d x = \frac{\sqrt{h}}{2}(a_{j+1} + a_j)\,,\qquad&1\leq j\leq N_S\,,\\
        \xi_{j+N_S} = \scalar{P_1^j}{L^j}_{I_j} &= \int_{x_j}^{x_j+h} P_1^j(x) L^j(x)\d x = \frac{\sqrt{h}}{2\sqrt{3}}(a_{j+1} - a_j)\,,\qquad&1 \leq j\leq N_S\,,\\
        \zeta_j = \scalar{P_0^j}{\dot{L}^j}_{I_j} &= \int_{x_j}^{x_j+h} P_0^j(x) \dot{L}^j(x)\d x = \frac{\sqrt{h}}{2}(n_{j+1} + n_j)\,,\qquad& 1\leq j\leq N_S\,,\\
        \zeta_{j+N_S} = \scalar{P_1^j}{\dot{L}^j}_{I_j} &= \int_{x_j}^{x_j+h} P_1^j(x) \dot{L}^j(x)\d x = \frac{\sqrt{h}}{2\sqrt{3}}(n_{j+1} - n_j)\,,\qquad& 1\leq j\leq N_S\,.
    \end{alignat}
\end{subequations}
If one is considering a Riemannian metric that is not uniform at the boundary, one needs to include the Riemannian measures at each interval and use a basis of orthogonal polinomials, see Eq. \eqref{eq:legendrep}, adapted to these different weights.
The unitary matrix $U_{kl}$ is defined by 
\begin{equation}\label{eq:unitarylegendre}
    U_{kl} =
        \begin{cases}
            \scalar{P_0^k}{U^N P_0^l}\,, & 1 \leq k , l \leq N_S\,, \\
            \scalar{P_1^{k-N_S}}{U^N P_0^l}\,, & N_S + 1 \leq k \leq 2N_S\,,\quad 1 \leq l \leq N_S\,,\\
            \scalar{P_0^k}{U^N P_1^{l-N_S}}\,, & 1 \leq k \leq N_S\,,\!\!\qquad N_S +1 \leq  l \leq 2N_S\,, \\
            \scalar{P_1^{k-N_S}}{U^N P_1^{l-N_S}}\,, & N_S + 1 \leq k,  l \leq 2N_S\,. \\
        \end{cases}\;
\end{equation}

As stated before, we need a finite element model where the restrictions to the boundary are linear functions and whose restrictions to the boundary of their normal derivatives are linear too. Moreover, the restriction of the functions at the border with the bulk region (see Fig.\ \ref{Reticulo_ext}) have also to be linear. This latter condition guarantees a continuous matching with the base-functions at the bulk. To achieve this, we have chosen the following model in a square with eight nodes. The reference square $\mathcal{R}_S$ is depicted in Fig.~\ref{Ref_square}. 
We will have on each square the following base-function:
\begin{equation}\label{eq:refpolynomial}
   \left.\phi_i\right|_S = p_{1i}y^3x+p_{2i}y^3+p_{3i}y^2x+p_{4i}y^2 +p_{5i}yx+p_{6i}y+p_{7i}x+p_{8i}\,,
\end{equation}
where $x$ is the coordinate parallel to the boundary and $y$ is the coordinate normal to it. The coordinates with $ y = 0$ will correspond to the boundary of the region while $ y = 1$ represents the region of the square that touches the bulk. Notice that the polynomial is at most linear in $x$ and that the derivative with respect to $y$ is also linear in $x$.
We shall consider that the pairs of interior nodes (denoted by $\Box$) are problem nodes and that the pairs of exterior nodes (denoted by $\Diamond$), called \emph{boundary nodes}, are going to be determined by the boundary conditions and the values at the problem nodes. Notice that the choice of a polynomial of up to second order in the variable $y$ would have given an unequal number of problem nodes and boundary nodes. 

With this model, the value of the normal derivative at the node $x_j$, in a square of side $h$, is given by the expression:
\begin{equation}\label{eq:normalderivative}
    n_j = \frac{1}{h}\left( -\frac{11}{2}a_j + 9b_j - \frac{9}{2}c_j + d_j \right)\,.
\end{equation}
Notice that Eq.~\ref{eq:normalderivative} would need to be modified in the case of non-flat metrics in order to obtain the proper correlation between the value of the normal derivative at the boundary and the value of the base functions at the nodes.

\begin{figure}[h]
    \centering
    \includegraphics[height = 3.5cm]{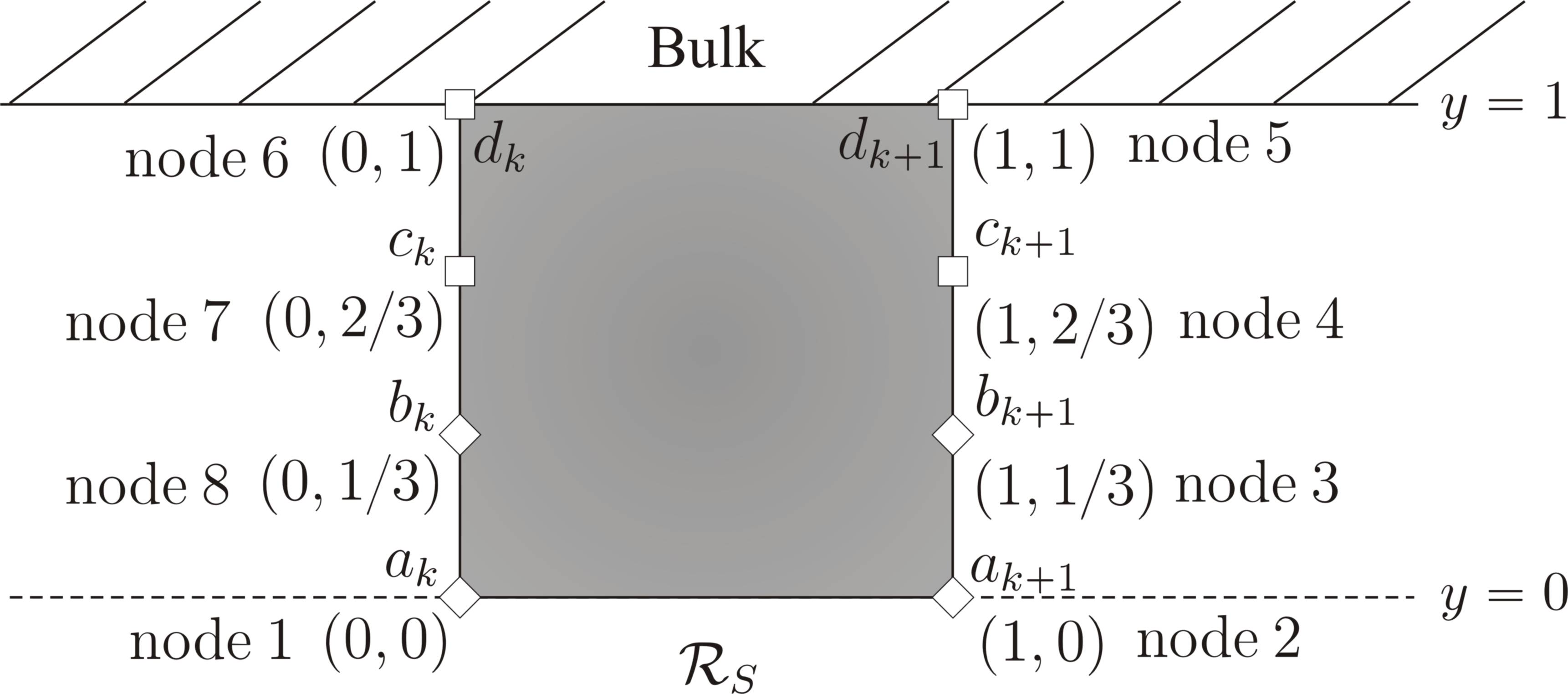}
    \caption{\small Reference square $\mathcal{R}_S$. Parameters $a$, $b$, $c$, $d$ are the value of the base-function at those nodes.  Parameters $a_k$ and $a_{k+1}$ represent the values of the base-function at the boundary, and parameters $d_k$ and $d_{k+1}$ are the values at the region touching the bulk. The $\Box$ denote the problem nodes and the $\Diamond$ denote the boundary nodes.}
    \label{Ref_square}
\end{figure}

Now we will use the boundary conditions, Eq.~\eqref{eq:legendrebc}, to determine the value at the boundary nodes in terms of the problem nodes. Hence, we will substitute back Eq.~\eqref{eq:normalderivative} in Eqs.~\eqref{eq:legendrecoefficients} and Eq.~\eqref{eq:legendrebc}. Denoting by $s$ a column vector with the values of $a_j$ and $b_j$, and by $w$ a column vector with the values of $c_i$ and $d_i$\,
\begin{equation}\label{eq:sw}
    s^\top = [a_1,\dots,a_{N_S}, b_1,\dots, b_{N_S}]\,, \qquad w^\top = [c_1,\dots, c_{N_S}, d_1,\dots, d_{N_S}]\,,
\end{equation}
and having into account that at the boundary we have identified node $x_0$ with node $x_{N_S}$, we get the following equations in matrix form:
\begin{equation}\label{eq:coefficientindependent}
	\begin{bmatrix}
		\mathcal{F}_0 \\ \mathcal{F}_1   
	\end{bmatrix}
	s =
	\begin{bmatrix}
		\mathcal{C}_0 \\ \mathcal{C}_1  
	\end{bmatrix}
	w,
\end{equation}
with
\begin{subequations}\label{eq:linearsystemcomplete}
\begin{align}
	\mathcal{F}_0 &=
	         \left( \frac{\sqrt{h}}{2} + i\frac{11}{4\sqrt{h}} \right)(\mathcal{N}+\mathcal{I})\mathcal{L} - i\frac{9}{2\sqrt{h}}(\mathcal{N}+\mathcal{I})\mathcal{R}\nonumber\\
                &\hspace{0.2cm}\phantom{aaaa}- U_{[00]}\left[\left( \frac{\sqrt{h}}{2} - i\frac{11}{4\sqrt{h}}\right)(\mathcal{N}+\mathcal{I})\mathcal{L} - i\frac{9}{2\sqrt{h}}(\mathcal{N}+\mathcal{I})\mathcal{R} \right]\nonumber\\
                &\hspace{0.2cm}\phantom{aaaa}-U_{[01]}\left[\left( \frac{\sqrt{h}}{2\sqrt{3}} - i\frac{11}{4\sqrt{3}\sqrt{h}}\right)(\mathcal{N}-\mathcal{I})\mathcal{L} + i\frac{9}{2\sqrt{3}\sqrt{h}}(\mathcal{N}-\mathcal{I})\mathcal{R} \right],
	\\
	\mathcal{F}_1 &=
	         \left( \frac{\sqrt{h}}{2\sqrt{3}} + i\frac{11}{4\sqrt{3}\sqrt{h}} \right)(\mathcal{N}-\mathcal{I})\mathcal{L} - i\frac{9}{2\sqrt{3}\sqrt{h}}(\mathcal{N}-\mathcal{I})\mathcal{R}\nonumber\\
        	  &\hspace{0.1cm}\phantom{aaaa}- U_{[10]}\left[\left( \frac{\sqrt{h}}{2} - i\frac{11}{4\sqrt{h}}\right)(\mathcal{N}+\mathcal{I})\mathcal{L} - i\frac{9}{2\sqrt{h}}(\mathcal{N}+\mathcal{I})\mathcal{R} \right]\notag\\
                &\hspace{0.1cm}\phantom{aaaa}-U_{[11]}\left[\left( \frac{\sqrt{h}}{2\sqrt{3}} - i\frac{11}{4\sqrt{3}\sqrt{h}}\right)(\mathcal{N}-\mathcal{I})\mathcal{L} + i\frac{9}{2\sqrt{3}\sqrt{h}}(\mathcal{N}-\mathcal{I})\mathcal{R} \right],
        \\
        \mathcal{C}_0 &=
                -i\frac{9}{4\sqrt{h}}(\mathcal{N}+\mathcal{I})\mathcal{L} + i \frac{1}{2\sqrt{h}}(\mathcal{N}+\mathcal{I})\mathcal{R}\nonumber\\
                &\hspace{1.3cm}\phantom{aaaa} + U_{[00]}\left[-i\frac{9}{4\sqrt{h}}(\mathcal{N}+\mathcal{I})\mathcal{L} + i\frac{1}{2\sqrt{h}}(\mathcal{N}+\mathcal{I})\mathcal{R}\right]\nonumber\\
                &\hspace{1.3cm}\phantom{aaaa} + U_{[01]}\left[-i\frac{9}{4\sqrt{3}\sqrt{h}}(\mathcal{N}-\mathcal{I})\mathcal{L} + i\frac{1}{2\sqrt{3}\sqrt{h}}(\mathcal{N}-\mathcal{I})\mathcal{R}\right]\,,
        \\
        \mathcal{C}_1 &=
                -i\frac{9}{4\sqrt{3}\sqrt{h}}(\mathcal{N}-\mathcal{I})\mathcal{L} + i \frac{1}{2\sqrt{3}\sqrt{h}}(\mathcal{N}-\mathcal{I})\mathcal{R}\notag\\
            &\hspace{1.35cm}\phantom{aaaa} + U_{[10]}\left[-i\frac{9}{4\sqrt{h}}(\mathcal{N}+\mathcal{I})\mathcal{L} + i\frac{1}{2\sqrt{h}}(\mathcal{N}+\mathcal{I})\mathcal{R}\right]\notag\\
            &\hspace{1.35cm}\phantom{aaaa} + U_{[11]}\left[-i\frac{9}{4\sqrt{3}\sqrt{h}}(\mathcal{N}-\mathcal{I})\mathcal{L} + i\frac{1}{2\sqrt{3}\sqrt{h}}(\mathcal{N}-\mathcal{I})\mathcal{R}\right]\,,\\\notag
\end{align}
\end{subequations}

\noindent where
\begin{equation}
	U = 
	\begin{pmatrix}
		U_{[00]} & U_{[01]} \\ U_{[10]} & U_{[11]}
	\end{pmatrix}
\end{equation}
is the block-wise decomposition of $U$, with blocks of size $N_S\times N_S$ that corresponds to the block-wise structure of Eq.~\eqref{eq:unitarylegendre}. The matrix 
 $\mathcal{I}$ is the $N_S\times N_S$ identity matrix, $\mathcal{N}$ is the $N_S\times N_S$ row cyclic-shift matrix defined as follows:
$$
    \mathcal{N} = \begin{pmatrix}
        0 & 1 & 0 & \cdots & 0 \\
        \vdots & \diagdots[-39]{1.2em}{.11em} & \hspace{0.04cm}\diagdots[-39]{1.2em}{.11em} & & \vdots\\
        \vdots & & \diagdots[-39]{1.2em}{.11em} & \diagdots[-39]{1.2em}{.11em} & 0 \\
        0 & & & \hspace{-0.1cm}\diagdots[-39]{1.2em}{.11em} & 1 \\
        1 & 0 & \cdots& \cdots & 0
    \end{pmatrix}\;.
$$
The matrix $\mathcal{L}$ is the $N_S\times 2N_S$ matrix whose first $N_S$ columns are the identity $N_S\times N_S$ matrix and the rest is zero $\mathcal{L}=[\mathcal{I}\;,\textbf{0}]$. Similarly, $\mathcal{R} = [\textbf{0}, \mathcal{I}]$ is the $N_S\times 2N_S$ matrix whose last $N_S$ columns are the identity matrix. 

Given a vector $w$, i.e.\ the vector with the values of the nodes $c_j$ and $d_j$ of Eq.~\eqref{eq:sw}, the linear system defined in Eqs.~\eqref{eq:linearsystemcomplete} determines a linear system of equations with unknown $s$\,, i.e.\ the vector with the values of the nodes $a_j$ and $b_j$\,. Therefore, we need to choose a set of vectors $\{w^k\}_{1\leq k\leq 2N_S}$ furnishing a set of linearly independent functions. The most natural choice for this set is the canonical  basis of $\mathbb{R}^{2N_S}$\,. This choice leads to the set of linear systems
\begin{equation}\label{eq:linearsystemcompact}
	\mathcal{F}s = \mathcal{C}\,,
\end{equation} 
with $\mathcal{F}, \mathcal{C}\in \mathbb{C}^{2N_S\times 2N_S}$\, the coefficient and independent matrices of the linear system in Eq.~\eqref{eq:coefficientindependent}. We use the same symbol $s$ to denote the solutions of this family of linear systems. This family depends on the unitary operator $U$ and is overdetermined. In order to solve it, we will use the singular value decomposition (SVD for short) of the coefficient matrix $\mathcal{F}$\,, cf.\ \cite{demmel97}\,.

The fact that the matrix $\mathcal{F}$ is not of full rank tells us that there is a freedom in the choice of $2N_S -\mathrm{rank(\mathcal{F})}$ parameters. For every choice of these parameters, one gets an alternative set of solutions of the linear system \eqref{eq:linearsystemcompact}. Instead of setting all these parameters to zero, we will additionally solve a least squares problem to fix these parameters in such a way that the solution is the one that deviates the less from the Neumann case. The Neumann case is taken as a reference set of boundary functions with value $a_j = 1$ at one node of the boundary and $a_k = 0$, $k \neq j$ at the remaining nodes of the boundary. The coefficients $\{b_j\}$ are chosen such that the normal derivatives, given by Eq.~\eqref{eq:normalderivative}, vanish. 
For the choices of the coefficients $\{c_j\}$ and $\{d_j\}$ as the canonical basis of $\mathbb{R}^{2N_S}$, the reference matrix $X_{\mathrm{Neumann}}$ is as follows
\begin{equation}\label{Neumann_matrices_alg}
    X_{\mathrm{Neumann}} = \left[ X_c\;\; X_d \right]\;, \quad X_c =
            \begin{bmatrix}
                \mathcal{I} \\ \frac{10}{9}\mathcal{I}
            \end{bmatrix}
            \;, \quad X_d =
            \begin{bmatrix}
                \mathcal{I} \\ \frac{1}{2}\mathcal{I}
            \end{bmatrix}\;.
\end{equation}
The reason for looking for the least squares approximation that better fits $X_{\mathrm{Neumann}}$ is two-fold:

First, it minimises the number of boundary elements where the boundary functions are different from zero. The performance of the algorithm is greatly increased in this way since it reduces polynomially the number of operations needed to compute the matrices of Eq.~\eqref{eq:geneigenvprob}. Recall that, in general, the boundary functions are non-local and spread along the boundary. Thus, the computations of the matrices $M$, $F$ and $B$ involve sums over all the squares of the boundary. Although, in principle, there are situations with non-local boundary conditions that necessarily spread along the full boundary, in practice the most usual situations require non-vanishing of the boundary functions in a very limited number of boundary elements. For instance, periodic and quasi-periodic boundary conditions can be implemented with boundary functions that are non-vanishing only at a pair of opposed sites of the grid. 

Second, the least squares problem solution tries to get the non-vanishing boundary values as close to the value 1 as possible, which improves the behaviour of the implementation of natural boundary conditions. In addition to this, the computation of the least squares problem solution can be achieved without almost no additional computational cost since the factorisation provided by the singular value decomposition can be used directly to compute the least squares solution.

Let $\mathcal{F}= \mathcal{W}\mathcal{S}\mathcal{V}^\dagger$ be the SVD of the matrix $\mathcal{F}$\,. The relation between the complete and reduced SVD factorisations is given in block-wise matrix notation as:
\begin{equation}
	\mathcal{W}= :\left[\mathcal{W}_{\mathrm{red}}, \mathcal{W}_{\mathrm{null}}\right]\,,\quad
	\mathcal{S} =: 
		\begin{bmatrix}
			\mathcal{S}_{\mathrm{red}} & 0 \\ 0 & 0
		\end{bmatrix}\,,\quad
	\mathcal{V}=:\left[\mathcal{V}_{\mathrm{red}}, \mathcal{V}_{\mathrm{null}}\right]\;.
\end{equation}
Before solving the system one has to check that the linear system is compatible within numerical error, i.e. 
\begin{equation}\label{check_system}
\mathcal{W}_{\mathrm{null}}^\dagger \mathcal{C} = 0\;.
\end{equation}
Deviations from this serve as a measure of the goodness of the solution.
The inhomogeneous solution of the linear system \eqref{eq:linearsystemcompact} is given by $s_{ih} = \mathcal{V}_{\mathrm{red}}x$\,, where $x$ is the solution of the reduced linear system
\begin{equation}
	\mathcal{S}_{\mathrm{red}}x = \mathcal{W}_{\mathrm{red}}^\dagger\mathcal{C}\;.
\end{equation}
The general solution of the linear system is therefore given by
\begin{equation}\label{eq:sollinearsystem}
	s = \mathcal{V}_{\mathrm{red}}x + \mathcal{V}_{\mathrm{null}}\varepsilon\;,
\end{equation}
where $\varepsilon\in\mathbb{C}^{2N_S-\mathrm{rank(\mathcal{F})}}$ is the family of parameters that we will fix by means of the least squares solution\,. Among all the possible solutions, we want the one that minimises 
\begin{equation}\label{minimise_s}
\norm{s-X_{\mathrm{Neumann}}}^2\;,
\end{equation}
where the norm is the euclidean norm in $\mathbb{R}^{2N_S}$\,. The least squares solution is 
\begin{equation}\label{eq:lqs}
	\varepsilon = \mathcal{V}_{\mathrm{null}}^\dagger(X_{\mathrm{Neumann}}-\mathcal{V}_{\mathrm{red}}x ) 
			=  \mathcal{V}_{\mathrm{null}}^\dagger X_{\mathrm{Neumann}}\;.
\end{equation}
Substituting back in Eq.~\eqref{eq:sollinearsystem} we get finally
\begin{equation}\label{eq:boundarysolution}
	s = \mathcal{V}_{\mathrm{red}}x + \mathcal{V}_{\mathrm{null}}\mathcal{V}_{\mathrm{null}}^\dagger X_{\mathrm{Neumann}}\;.
\end{equation}
Notice that there was no need to compute further factorisations of the matrices, and hence, the additional computational cost is just the multiplication of the matrices at the right hand side of Eq.~\eqref{eq:boundarysolution}\,.

Let $r:=\mathrm{rank}(\mathcal{F})\leq 2N_S$ be the rank of the matrix $\mathcal{F}$. This means that the set of linear systems of Eq.~\eqref{eq:linearsystemcompact} provides $r$ linearly independent base-functions at the boundary satisfying the boundary conditions. 
In addition to the solutions above, we need to consider an additional set of linearly independent boundary functions satisfying trivially the boundary conditions, i.e.\ $a_j=0$\,, $n_j=0$ for $1\leq j\leq N_S$\;. This is necessary to provide a complete basis for the FEM. Having into account Eq.~\eqref{eq:normalderivative}, this implies that we can consider another family of boundary functions determined by
\begin{equation}\label{parameters_trivial}
	a_j = 0\,,\quad b_j=\frac{1}{2}c_j - \frac{1}{9}d_j\,,\quad 1\leq j\leq n\,.
\end{equation}
This family is clearly linearly independent from the family obtained as solution of \eqref{eq:linearsystemcompact}. Hence, we have two different families of base-functions at the boundary. The base-functions satisfying Eq.~\eqref{parameters_trivial}, that will be called \emph{1$^{st}$ family} from now on, and the solutions of Eq.~\eqref{eq:linearsystemcompact}, that will be called \emph{2$^{nd}$ family}.

Once the finite element model is determined, we have to compute the entries of the matrices $M$, $B$ and $F$ corresponding to the base-functions at the boundary, cf.\ Eq.~\eqref{eq:geneigenvprob}. Those are, according to Eq.~\eqref{eq:refpolynomial}, as follows:
\begin{equation}\label{base_function_square}
\phi_i=\begin{cases}
p^1_{1i}y^3x+p^1_{2i}y^3+p^1_{3i}y^2x+p^1_{4i}y^2 +p^1_{5i}yx+p^1_{6i}y+p^1_{7i}x+p^1_{8i},&\!\text{square}\:1 \\
p^2_{1i}y^3x+p^2_{2i}y^3+p^2_{3i}y^2x+p^2_{4i}y^2+p^2_{5i}yx+p^2_{6i}y+p^2_{7i}x+p^2_{8i},& \!\text{square}\:2 \\
\hfil\vdots&\hfil\vdots\\
p^{N_S}_{1i}y^3x\hspace{-0.03cm}+\hspace{-0.03cm}p^{N_S}_{2i}y^3\hspace{-0.03cm}+\hspace{-0.03cm}p^{N_S}_{3i}y^2x\hspace{-0.03cm}+\hspace{-0.03cm}p^{N_S}_{4i}y^2\hspace{-0.03cm}+\hspace{-0.03cm}p^{N_S}_{5i}yx\hspace{-0.03cm}+\hspace{-0.03cm}p^{N_S}_{6i}y\hspace{-0.03cm}+\hspace{-0.03cm}p^{N_S}_{7i}x\hspace{-0.03cm}+\hspace{-0.03cm}p^{N_S}_{8i}\!,&
\!\text{square}\: N_S \\
\vspace{-0.4cm}\phantom{a}
\end{cases} 
\end{equation}
where the $x,y$ coordinates correspond to the parallel and normal direction on each square with respect to the boundary.  Notice that the base-functions are cubic in $y$ and linear in $x$, with $i=1,\ldots,N_S+r$, where $N_S$ is the number of squares at the boundary and $r$ is the rank of $\mathcal{F}$. 

The linear transformation that maps the reference square in Fig.\,\ref{Ref_square} to any square is
\begin{equation}
\begin{pmatrix}
x\\
y
\end{pmatrix}=\begin{pmatrix}
x_1\\
y_1
\end{pmatrix}+\begin{pmatrix}
x_2-x_1&0\\
0&y_6-y_1
\end{pmatrix}\begin{pmatrix}
\eta\\
\xi
\end{pmatrix},
\end{equation}
and the first derivatives of the coordinates in the reference square with respect to $x$ and $y$ are
\begin{equation}
\begin{pmatrix}
\eta_x&\hspace{0.5cm}\eta_y\vspace{0.5cm}\\
\xi_x&\hspace{0.5cm} \xi_y
\end{pmatrix}=\begin{pmatrix}
\displaystyle{\frac{1}{x_2-x_1}}& 0\\
0 &\displaystyle{\frac{1}{y_6-y_1}}
\end{pmatrix}.
\end{equation}

Finally, to compute the integrals \eqref{matrices_M_B} and \eqref{matrix_F}, one needs to compute the contributions of each square to the matrix elements, i.e.\ $M_{ij}|_{S_k}$\,, $B_{ij}|_{S_k}$ and $F_{ij}|_{S_k}$\,. Each one of these contributions is the result of the integral of a certain polynomial on the reference square $\mathcal{R}_S$ in terms of the values of the nodes. It is important to remark that the contribution to $F_{ij}|_{S_k}$ has to be done by integrating in the counterclockwise orientation of the boundary.


\subsection{Convergence of the finite element model}

In the previous subsection, we have introduced a finite element model to approximate the spectral problem of Eq.~\eqref{spectralproblem}. It remains to show that the finite element model is indeed an approximating family, cf.\ Def.~\ref{def: approx family}. First of all, let us remark that the condition showed in Eq.~\eqref{eq:h1aprox} is satisfied as a consequence of choosing piecewise continuous functions as finite element model. In order that our family becomes an approximating family, we just need to choose $U^N$ as the unique unitary operator which corresponds to a projector $P_{U^N}$, and a self-adjoint operator $A_{U^N}$ satisfying Assumption \ref{ass:1}. The following result shows a particular simple choice that gathers these conditions.\\

\begin{proposition}
	Let $P_U$ and $A_U$ be the operators defining the spectral problem of Eq.~\eqref{spectralproblem}. Suppose that $P_U$ is a continuous operator with respect to the Sobolev norm $\H^{1/2}(\pO)$\,. Let $K^N$ be the orthogonal projectors onto the subspaces defined by the piecewise linear continuous functions parameterised by the Legendre polynomials of Eq.~\eqref{eq:legendrep} and such that $$\norm{K^N\varphi}_{\H^{1/2}(\pO)}\leq c\norm{\varphi}_{\H^{1/2}(\pO)}\;.$$ Let 
	$$P_{U^N}:=K^NP_{U}K^N\quad \text{and}\quad  A_{U^N}:=K^NA_{U}K^N\,,$$
then, $P_{U^N}$ and $A_{U^N}$ satisfy Assumption \ref{ass:1}.\\
\end{proposition}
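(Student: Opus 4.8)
The plan is to reduce both items of Assumption~\ref{ass:1} to two strong-convergence statements for the projectors $K^N$, namely $\norm{(K^N-\1)\psi}_{\L^2(\pO)}\to 0$ for every $\psi\in\L^2(\pO)$ and $\norm{(K^N-\1)\varphi}_{\H^{1/2}(\pO)}\to 0$ for every $\varphi\in\H^{1/2}(\pO)$, after which the conclusion follows by elementary telescoping estimates. The $\L^2$-convergence is immediate: the piecewise linear continuous functions parameterised by the Legendre polynomials of Eq.~\eqref{eq:legendrep} are dense in $\L^2(\pO)$ as the mesh is refined, and $K^N\psi$ is by construction the best $\L^2$-approximation of $\psi$ in the corresponding subspace. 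The $\H^{1/2}$-convergence is the delicate point, and I expect it to be the main obstacle. Here I would use that $K^N$ is idempotent with range the finite element subspace, so that $K^N\chi=\chi$ for every element $\chi$ of that subspace; combining this with the uniform stability hypothesis one obtains, for each such $\chi$,
\[
\norm{(\1-K^N)\varphi}_{\H^{1/2}(\pO)}=\norm{(\1-K^N)(\varphi-\chi)}_{\H^{1/2}(\pO)}\le(1+c)\norm{\varphi-\chi}_{\H^{1/2}(\pO)}\,.
\]
Taking the infimum over $\chi$ and invoking density of the finite element subspaces in $\H^{1/2}(\pO)$ shows that the right-hand side tends to zero; this quasi-optimality argument is exactly where the uniform bound $c$ is indispensable.

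For the first item I would expand $P_{U^N}=K^NP_UK^N$ and split
\[
P_{U^N}-P_U=K^NP_U(K^N-\1)+(K^N-\1)P_U\,.
\]
Applying this to $\varphi\in\H^{1/2}(\pO)$ and taking $\H^{1/2}$-norms, the second summand is $\norm{(K^N-\1)P_U\varphi}_{\H^{1/2}(\pO)}$, which tends to zero because $P_U\varphi\in\H^{1/2}(\pO)$ by the hypothesised $\H^{1/2}$-continuity of $P_U$ and $K^N\to\1$ strongly on $\H^{1/2}(\pO)$. The first summand is bounded, using the uniform stability of $K^N$ and the $\H^{1/2}$-operator norm of $P_U$, by $c\,\norm{P_U}\,\norm{(K^N-\1)\varphi}_{\H^{1/2}(\pO)}$, which again vanishes. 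Hence item \eqref{en:ass1i} holds.

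For the second item I would exploit that $K^N$ is self-adjoint on $\L^2(\pO)$ and that $A_U$ is a bounded operator on $\L^2(\pO)$ by Def.~\ref{partialCayley}. Writing $A_{U^N}=K^NA_UK^N$ and moving one projector across the inner product,
\[
\scalar{\psi}{(A_{U^N}-A_U)\varphi}_{\L^2(\pO)}=\scalar{(K^N-\1)\psi}{A_UK^N\varphi}_{\L^2(\pO)}+\scalar{\psi}{A_U(K^N-\1)\varphi}_{\L^2(\pO)}\,.
\]
By Cauchy--Schwarz and $\norm{K^N}_{\L^2(\pO)}\le 1$, the first term is bounded by $\norm{(K^N-\1)\psi}_{\L^2(\pO)}\,\norm{A_U}\,\norm{\varphi}_{\L^2(\pO)}$ and the second by $\norm{\psi}_{\L^2(\pO)}\,\norm{A_U}\,\norm{(K^N-\1)\varphi}_{\L^2(\pO)}$, where $\norm{A_U}$ denotes the operator norm on $\L^2(\pO)$. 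Both factors $\norm{(K^N-\1)\psi}_{\L^2(\pO)}$ and $\norm{(K^N-\1)\varphi}_{\L^2(\pO)}$ tend to zero by the strong $\L^2$-convergence established at the outset, so item \eqref{en:ass1ii} follows and the proof is complete.
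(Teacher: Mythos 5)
Your proof is correct and follows essentially the same route as the paper: the same telescoping decompositions for $P_{U^N}-P_U$ and for $\scalar{\psi}{(A_{U^N}-A_U)\varphi}$ (your two-term splitting via self-adjointness of $K^N$ matches the paper's three-term splitting, whose third term vanishes by the definition $A_{U^N}=K^NA_UK^N$), all controlled by the uniform $\H^{1/2}$-stability of $K^N$, the $\H^{1/2}$-continuity of $P_U$, the $\L^2$-boundedness of $A_U$, and strong convergence $K^N\to\1$ on $\L^2(\pO)$ and $\H^{1/2}(\pO)$. The only substantive difference is that you justify the strong $\H^{1/2}$-convergence by the quasi-optimality estimate $\norm{(\1-K^N)\varphi}_{\H^{1/2}(\pO)}\leq(1+c)\inf_{\chi}\norm{\varphi-\chi}_{\H^{1/2}(\pO)}$ together with density of the finite element subspaces, which is in fact more careful than the paper's direct appeal to $\norm{(\1-K^N)\varphi}_{\H^{1/2}(\pO)}\leq\norm{(\1-K^N)\varphi}_{\H^{1}(\pO)}$, an inequality that as written only makes sense for $\varphi\in\H^{1}(\pO)$ and implicitly requires exactly the density-plus-uniform-boundedness argument you spell out.
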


\begin{proof}
Let us check first point \ref{en:ass1i}):
\begin{align*}
    \norm{(K^NP_UK^N-P_U)\varphi}_{\H^{1/2}(\pO)} 
        & \leq \norm{(K^N-\mathbb{I})P_U\varphi}_{\H^{1/2}(\pO)} \\
        &\phantom{aaaaaaaaaaaaaaaa}+ \norm{(K^NP_U(\mathbb{I}-K^N)\varphi}_{\H^{1/2}(\pO)}\\
        & \leq \norm{(K^N-\mathbb{I})P_U\varphi}_{\H^{1/2}(\pO)} + c\norm{(\mathbb{I}-K^N)\varphi}_{\H^{1/2}(\pO)}\,,
\end{align*}
where we have used that $P_U$ is continuous with respect to the Sobolev norm $\H^{1/2}(\pO)$\,. Since $K^N$ is the projector onto the space of continuous piecewise linear functions at the boundary and 
$$\norm{(\mathbb{I}-K^N)\varphi}_{\H^{1/2}(\pO)}\leq \norm{(\mathbb{I}-K^N)\varphi}_{\H^{1}(\pO)}\;,$$
the right hand side can be chosen smaller than any $\epsilon$ by choosing the lattice spacing small enough.

Let us now prove \ref{en:ass1ii}). Let $\psi, \varphi \in \L^2(\pO)$, then,
\begin{align*}
    \scalar{\psi}{(A_{U^N}-A_U)\varphi} 
        &= \scalar{\psi-K^N\psi}{(A_{U^N}-A_U)\varphi}\\ 
        &\phantom{aaaaa}+ \scalar{K^N\psi}{(A_{U^N}-A_U)(\varphi - K^N\varphi)} \\
        &\phantom{aaaaa}+ \scalar{K^N\psi}{(A_{U^N}-A_U)K^N\varphi}\\
        &\leq 2\norm{(\mathbb{I}-K^N)\psi}\norm{A_U}\norm{\varphi} + 2\norm{K^N\Psi}\norm{A_U}\norm{(\mathbb{I}-K^N)\varphi}\\
        &\phantom{aaaaa}+ \scalar{\psi}{(A_{U^N}-K^NA_UK^N)\varphi}\,.
\end{align*}
The third factor at the right hand side of the inequality is trivially zero, and the other two go to zero by the same argument of the previous point.
\end{proof}


\section{Numerical examples}\label{sec:examples}

In this section we compute explicitly the solutions of several situations to show the capabilities of the algorithm. This will serve not only for checking the effectiveness of the procedure but also to show how to chose the input matrix $U$ in order to implement boundary conditions for different applications. In all the examples the parameter that fixes the discretisation size is taken to be the number $n$ of divisions of one side of the boundary.

\subsection{Dirichlet and Neumann boundary conditions}\label{ex:DirichNeum}
This first example is the most common and can be easily implemented in FEM. However, since the solution can be computed analytically it serves as a simple check. Moreover, it will serve as a first example showing how to implement the different boundary conditions. It is worth to notice that these two types of boundary conditions have very different nature. Dirichlet boundary conditions are of essential type while Neumann boundary conditions are of natural type. Nevertheless, the numerical scheme can handle both of them on the same footing.
As explained in Section~\ref{FEM:Boundary}, the unitary matrix that serves as input is an approximation to the space of piecewise linear functions with basis the Legendre polynomials at each site of the discretisation of the boundary. The unitary matrix has the block-wise linear structure
\begin{equation}
	U = 
		\begin{bmatrix}
			U_{[00]} & U_{[01]} \\ U_{[10]} & U_{[11]}
		\end{bmatrix}\;,
\end{equation}
where $U_{[00]}$, \dots,$U_{[11]}$ are the blocks corresponding to Eq.~\eqref{eq:unitarylegendre}. If $N_S$ is the number of finite elements at the boundary, the blocks $U_{[ij]}$ are $N_S\times N_S$ and therefore $U$ is of shape $2N_S\times2 N_S$\,. As can be read from Eq.~\eqref{eq:asorey}, Dirichlet and Neumann boundary conditions are almost trivial to implement. The former corresponds to the choice $U = -\mathbb{I}$ and the latter for $U = \mathbb{I}$\,. From Eq.~\eqref{eq:asorey} or its approximated counterpart, Eq.~\eqref{eq:approxbc} and Eq.~\eqref{eq:legendrebc}, it is clear that the former imposes $\varphi = 0$ while the latter imposes $\dot{\varphi} = 0$\,.

In this simple situation, the linear system $\eqref{eq:linearsystemcompact}$ returns the base-functions at the boundary of these well-studied problems and, in both cases, the solutions obtained agree in accuracy with the expected behaviour of a FEM with piecewise linear polynomials, cf.~\cite{brenner08}. Results are shown in Table~\ref{table:DirichletNeumann}

\begin{table}[h]
\begin{tabular}{c|cccccc}
 & Eigenvalue&&&&&\\ 
	 & 1& 2& 3& 4& 5& 6\\ 
\hline 
	Dirichlet & 19.7402& 49.3536& 49.3536& 78.9733& 98.7159& 98.7159\\ 
	Exact & 19.7392&  49.3480& 49.3480& 78.9568& 98.6960& 98.6960\\ 
\hline
	Neumann & -0.0000& 9.8717& 9.8717& 19.7480& 39.4810& 39.4965\\ 
	Exact & 0 & 9.8696& 9.8696& 19.7392& 39.4784& 39.4784\\ 
\end{tabular}
\caption{\small Dirichlet and Neumann eigenvalues for $n = 201$ and the exact solutions respectively.}\label{table:DirichletNeumann}
\end{table}

\subsection{Robin boundary conditions}\label{ex:Robin}
This is an example of a situation where natural boundary conditions are considered. These boundary conditions for the Laplace operator are written as
$$\dot{\varphi} = \Lambda \varphi\;.$$
In the context of solid state physics these boundary conditions represent the interphase between a superconductor, sitting in the region $\Omega$, surrounded by an insulator \cite{asorey13,asorey16}. Applications of these boundary conditions go far beyond the applications in Quantum Physics and they can be applied to a large variety of physical problems like heat conduction, where they are known as convective boundary conditions \cite{HO12} or electromagnetic problems, where they are related with the impedance boundary conditions \cite{mohsen82}.
Notice that the solutions will not be calculated in the standard way, that is, using Neumann boundary conditions, i.e.\ $U = \mathbb{I}$, and then calculating the matrix $F$, cf. Eq.~\eqref{matrix_F} and Eq.~\eqref{eq:geneigenvprob}, associated to them by calculating the scalar products $\scalar{\varphi}{\Lambda\varphi}_{\pO}$\,. Instead, we will do it by considering the unitary matrix 
\begin{equation}\label{eq:robinmatrix}
	U_{kl} = e^{i\alpha}\cdot\delta_{kl}
\end{equation}
as input for the problem. Let us justify the choice of this unitary matrix. Consider the unitary operator on $\L^2(\pO)$ defined by:
\begin{equation}\label{eq:unitaryrobin}
	U\varphi = e^{i\alpha}\varphi\;.
\end{equation}
Simple algebraic manipulations on Eq.~\eqref{eq:asorey} show that this is equivalent to 
\begin{equation}
	\dot{\varphi}=-\tan(\frac{\alpha}{2})\varphi\,,\quad \text{with}\quad \alpha \neq \pm \pi\;. 
\end{equation}
The case $\alpha = \pm \pi$, corresponds to $U = -\mathbb{I}$\,, i.e.\ Dirichlet boundary conditions. The Robin parameter is therefore $\Lambda=-\tan(\frac{\alpha}{2})$. Since this unitary operator is a constant multiplication operator, its representation in the Legendre basis expansion is again a constant multiplication operator, thus, we get \eqref{eq:robinmatrix}.

Notice that the behaviour for positive and negative values of $\alpha$ is very different. While $\alpha > 0$ leads to positive definite self-adjoint extensions of the Laplace operator, for $\alpha < 0$ the problem is no longer positive defined and negative eigenvalues appear. Nevertheless, it is still lower semi-bounded, cf.\ \cite{grubb12, ibortlledo14b,asorey13}, and thus the convergence is ensured. As already happens in the one-dimensional version of this procedure, cf.\ \cite{Ib13}, convergence for the $\alpha<0$ case is slower than for the $\alpha>0$ situation. This is expected to happen because the eigenvalue problem for Robin boundary conditions with positive parameter presents locking, cf. \cite{babuska92}, if the spectrum of the unitary operator is close to $\{-1\}$, i.e.\ $\alpha \to -\pi$. In Table \ref{table:robin-} and Table \ref{table:robin}, are shown the calculated eigenvalues for $\alpha = -0.9 \pi$ and for $\alpha = 0.9 \pi$, respectively, for different discretisation sizes.\\

\begin{table}[h]
\begin{tabular}{c|cccccc}
 & Eigenvalue&&&&&\\ 
	 n& 1& 2& 3& 4& 9& 10\\ 
\hline 
	101& -64.7361& -63.4717& -63.4717& -62.1488& 28.7944& 31.6599\\ 
	153& -68.8114& -67.7815& -67.7815& -66.7229& 27.8643& 29.6896\\ 
	201& -71.0235& -70.1025& -70.1025& -69.1627& 27.4320& 28.7390\\ 
	251& -72.5633& -71.7112& -71.7112& -70.8456& 27.1572& 28.1208\\ 
\end{tabular}
\caption{\small Robin eigenvalues for $\alpha = -0.9 \pi$ for discretisation sizes $n = 101, 153, 201, 251$.}\label{table:robin-}
\end{table}
\vspace{-0.5cm}
\begin{table}[h]
\begin{tabular}{c|cccccc}
 & Eigenvalue&&&&&\\ 
	 n& 1& 2& 3& 4& 5& 6\\ 
\hline 
	101& 11.7493& 30.5681& 30.5681& 49.2777& 64.7353& 64.9841\\ 
	153& 11.7108& 30.4944& 30.4944& 49.2079& 64.6621& 64.8201\\ 
	201& 11.6927& 30.4598& 30.4598& 49.1745& 64.6283& 64.7460\\ 
	251& 11.6811& 30.4377& 30.4377& 49.1528& 64.6067& 64.6996\\ 
\end{tabular}
\caption{\small Robin eigenvalues for $\alpha = 0.9 \pi$ for discretisation sizes $n = 101, 153, 201, 251$.}\label{table:robin}
\end{table}

\subsection{Periodic boundary conditions}\label{ex:Periodic}
In this example we consider periodic boundary conditions. These constitute an example of purely essential boundary conditions.  The convenience of choosing the Legendre basis expansion at each interval of the boundary will be manifest with this example. Let us introduce first the unitary operator that implements the exact boundary conditions on the square $[0,1]\times[0,1]$. We will impose periodic boundary conditions along opposed sites of the square. The unitary operator that corresponds to periodic boundary conditions has a block-wise structure according to the decomposition of the Hilbert space at the boundary corresponding to the four sides of the square depicted in Fig. \ref{fig:periodicbc}, cf. \cite[Example 5.2]{ibortlledo14b},
\begin{equation}\label{eq:hilbertboundarydecomp}
	\L^2(\pO) = \L^2(I_1)\oplus\L^2(I_2)\oplus\L^2(I_3)\oplus\L^2(I_4)\;.
\end{equation}
Here, $\L^2(I_i) = \L^2(I_j), i,j = 1,\dots,4$\,.

\begin{figure}[h]
\centering
\includegraphics[width=4.5cm]{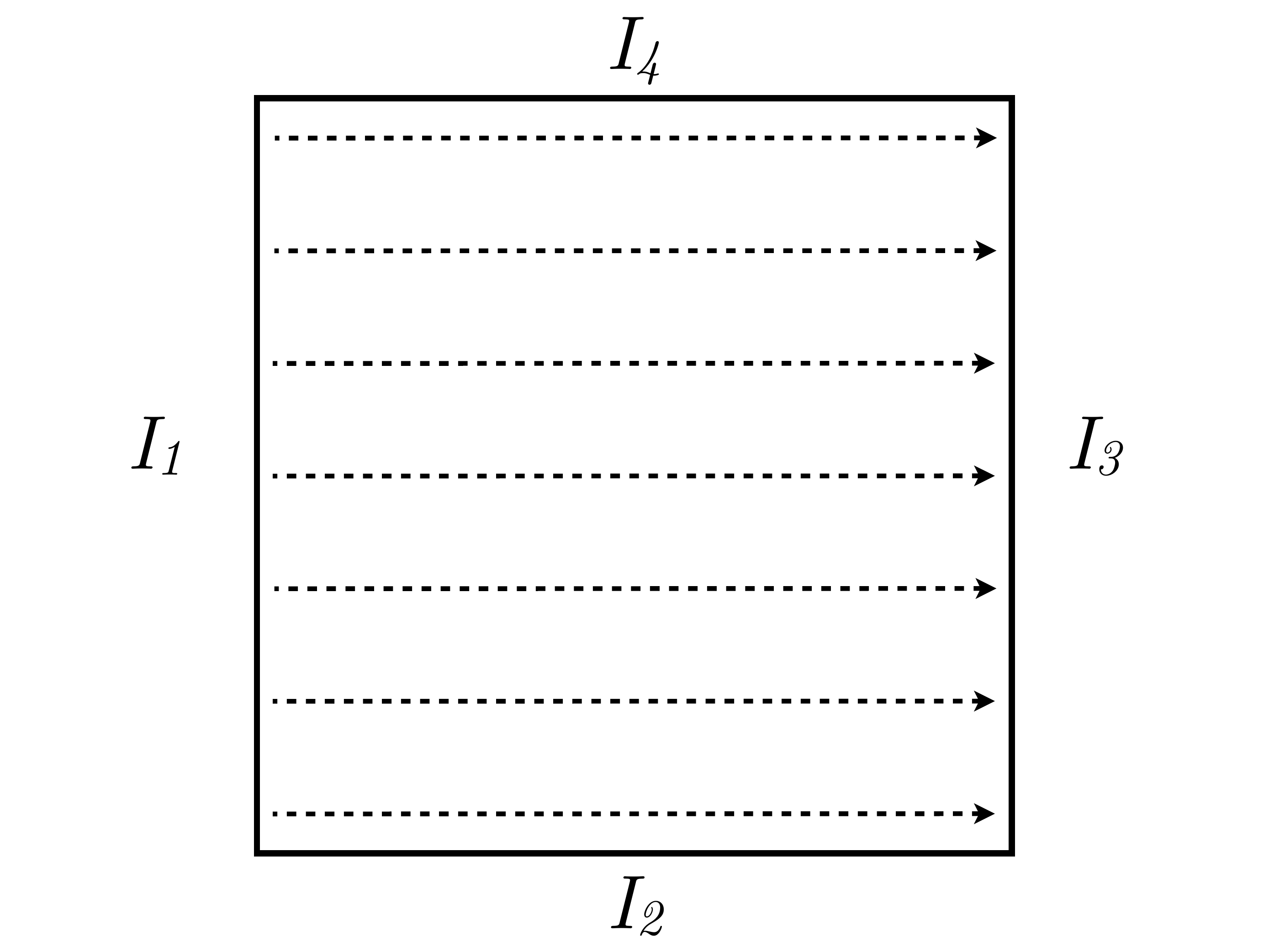}
\captionsetup{margin=2cm}
\caption{\small Identification of the opposite sites of the square that gives rise to periodic boundary conditions. Only the identification between $I_1$ and $I_3$ is depicted.}
\label{fig:periodicbc}
\end{figure}

Notice that a direct identification $I_1 = I_3$ and $I_2 = I_4$ would not let to periodic boundary conditions since both intervals have the same orientation. On the contrary, we want the left end of $I_1$ to be identified with the right end of $I_3$, and equivalently for $I_2$ and $I_4$. This implies that, in addition to this identification, we need to use an orientation reversing diffeomorphism. In coordinates relative to the centre of each interval, the latter takes the form $T:x \to -x$\,. Therefore, in the block-wise structure coming from the decomposition \eqref{eq:hilbertboundarydecomp}, the unitary operator takes the form
\begin{equation}\label{eq:periodunitaryexact}
	U = 
		\begin{bmatrix}
			0 & 0 & T^* & 0 \\
			0 & T^* & 0 & 0 \\
			T^* & 0 & 0 & 0 \\
			0 & 0 & 0 & T^* 
		\end{bmatrix}\;,
\end{equation}
where $T^*$ denotes the pull-back under the diffeomorphism. This is the unitary operator implementing the exact boundary condition of the problem. Inserting this unitary operator in Eq.~\eqref{eq:asorey}, and after some manipulations, it is easy to obtain that this is equivalent to $$\gamma(\Phi)|_{I_1} = T^*\gamma(\Phi)|_{I_3}$$ and similarly for $I_2$ and $I_4$\,.

Now, we need to express this unitary operator in the basis of the Legendre polynomials, see Eqs.~\eqref{eq:unitarylegendre}. Assume first that we take the coarsest FE approximation of the boundary that is possible in this setting. That is, the discretisation given by the four intervals $I_i$, $i= 1,\dots,4$\,. For this coarse approximation, the matrix $U_{kl}$ is an $8\times8$ unitary matrix. Since the basis for the finite elements approximation at the boundary is given in terms of Legendre polynomials, most of the terms cancel out except:
\begin{align*}
	\scalar{P_0^1}{UP_0^3}  &=  \scalar{P_0^3}{UP_0^1} =\scalar{P_0^2}{UP_0^4}  =  \scalar{P_0^4}{UP_0^2} =1\,,\\
	\scalar{P_1^1}{UP_1^3}  &=  \scalar{P_1^3}{UP_1^1} = \scalar{P_1^2}{UP_1^4}  =  \scalar{P_1^4}{UP_1^2} = -1\,.
\end{align*}
The difference in sign comes from the fact that the orientation reversing diffeomorphism leaves the even order Legendre polynomials invariant while the odd order polynomials change sign. Extending this result to the general situation, where each side of the square is divided into $n$ intervals, is straightforward. The unitary matrix $U_{kl}$ takes the form
\begin{align}
	U_{kl} &= 
		\begin{bmatrix}
			U_0 & 0 \\
			0 & -U_0
		\end{bmatrix}
		\in \mathbb{C}^{8n\times 8n}\;,\quad 
	U_0 = 
		\begin{bmatrix}
			0 & 0 & P & 0 \\
			0 & P & 0 & 0 \\
			P & 0 & 0 & 0 \\
			0 & 0 & 0 & P
		\end{bmatrix}
		\in \mathbb{C}^{4n\times 4n}\;, \notag\\
	&\hspace{2.3cm} P =
		\begin{bmatrix}
			0 & & 1\\
			   &\udots & \\
			 1 & & 0
		\end{bmatrix}
		\in \mathbb{C}^{n\times n}\;.
\end{align}

In Table~\ref{table:periodic} are shown the lowest eigenvalues with the aforementioned periodic boundary conditions for several values of the discretisation parameter and the exact values computed analytically.

\begin{table}[h]
\begin{tabular}{c|cccccc}
 & Eigenvalue&&&&&\\ 
	 n& 1& 2& 3& 4& 5& 6\\ 
\hline 
	101& -0.0000& 39.4885& 39.5103& 39.5103& 39.5487& 79.0194\\ 
	153& 0.0000& 39.4829& 39.4923& 39.4924& 39.5094& 78.9849\\ 
	201& -0.0000& 39.4810& 39.4865& 39.4866& 39.4965& 78.9733\\ 
	251& 0.0000& 39.4801& 39.4836& 39.4837& 39.4900& 78.9675\\ 
	Exact & 0.0& 39.4784& 39.4784& 39.4784& 39.4784& 78.9568\\ 
\end{tabular}
\caption{\small Eigenvalues for periodic boundary conditions for discretisation sizes $n = 101, 153, 201, 251$.}\label{table:periodic}
\end{table}

\subsection{Quasiperiodic boundary conditions}\label{ex:Quasiperiodic}

In this example we show how to implement quasiperiodic boundary conditions (also called Bloch-periodic boundary conditions). These boundary conditions allow to compute the energy bands of electrons in metals \cite{kittel96,sukumar09}. The domain $\Omega$ represents in this case the fundamental domain of the Crystal. The phase $\alpha$ becomes the momentum on the reciprocal lattice. In general, one needs to include a potential $(-\Delta + V)\Psi = E\Psi$, i.e. to consider the Shr\"odinger equation, to treat this case. As mentioned in the introduction, regular potentials do not change the domains of self-adjointness \cite{reed75} and can be included straightforwardly as in a standard FEM. That is, the matrix elements would be computed by $\scalar{\Phi_i}{V\Phi_j}$ and the implementation of the boundary conditions can be done as introduced in Section \ref{sec:ArbitraryDimension} and Section \ref{sec:FEM}. These boundary conditions appear also in the computation of band gaps in photonic crystal fibres \cite{norton08}. 

We will consider a situation in which we impose quasiperiodic boundary conditions between the sides $I_1$ and $I_3$ of the previous example and periodic boundary conditions between $I_2$ and $I_4$\,. The unitary operator that implements the exact boundary condition is
\begin{equation}\label{eq:quasiperiodunitaryexact}
	U = 
		\begin{bmatrix}
			0 & 0 & e^{i\alpha}T^* & 0 \\
			0 & T^* & 0 & 0 \\
			e^{-i\alpha}T^* & 0 & 0 & 0 \\
			0 & 0 & 0 & T^* 
		\end{bmatrix}\;,
\end{equation}
where $e^{i\alpha}$ is the complex phase between the sites $I_1$ and $I_3$\,, cf. \cite[Example 5.3]{ibortlledo14b}. This is, it suffices to multiply by a complex phase and its conjugate the appropriate rows of the unitary operator implementing periodic boundary conditions defined in the previous example. Similar calculations to those in the previous example lead to the unitary matrix to be used as input in the problem
\begin{align}
	U &= 
		\begin{bmatrix}
			U_0 & 0 \\
			0 & -U_0
		\end{bmatrix}
		\in \mathbb{C}^{8n\times 8n}\;,\quad 
	U_0 = 
		\begin{bmatrix}
			0 & 0 & e^{i\alpha}P & 0 \\
			0 & P & 0 & 0 \\
			e^{-i\alpha}P & 0 & 0 & 0 \\
			0 & 0 & 0 & P
		\end{bmatrix}
		\in \mathbb{C}^{4n\times 4n}\;, \notag
\end{align}
\begin{align}		
	P &= 
		\begin{bmatrix}
			0 & & 1\\
			   &\udots & \\
			 1 & & 0
		\end{bmatrix}
		\in \mathbb{C}^{n\times n}\;.
\end{align}

In Table~\ref{table:quasiperiodic} are shown the lowest eigenvalues for the case $\alpha = \frac{\pi}{4}$ and the exact analytical values of the problem, which are given by the formula 
$$E = 4\pi^2\left(m^2+\left(n+\frac{1}{8}\right)^2\right)\,,\quad m,n \in\mathbb{Z}\;.$$
One of the eigenfunctions is shown in Fig.~\ref{fig:qausiperiodic}.

\begin{table}[h]
\begin{tabular}{c|cccccc}
 & Eigenvalue&&&&&\\ 
	 n& 1& 2& 3& 4& 5& 6\\ 
\hline 
	101& 0.6173& 30.2514& 40.1278& 40.1370& 50.0140& 69.7748\\ 
	151& 0.6170& 30.2373& 40.1099& 40.1142& 49.9870& 69.7363\\ 
	201& 0.6170& 30.2323& 40.1036& 40.1060& 49.9774& 69.7224\\ 
	251& 0.6169& 30.2299& 40.1006& 40.1022& 49.9730& 69.7159\\ 
	Exact & 0.6168& 30.2256& 40.0952& 40.0952& 49.9648& 69.7040\\ 
\end{tabular}
\caption{\small Eigenvalues for quasi-periodic boundary conditions for $\alpha=\frac{\pi}{4}$ for discretisation sizes $n = 101, 151, 201, 251$.}\label{table:quasiperiodic}
\end{table}

\begin{figure*}[h!]
        \center
        \includegraphics[width=11cm]{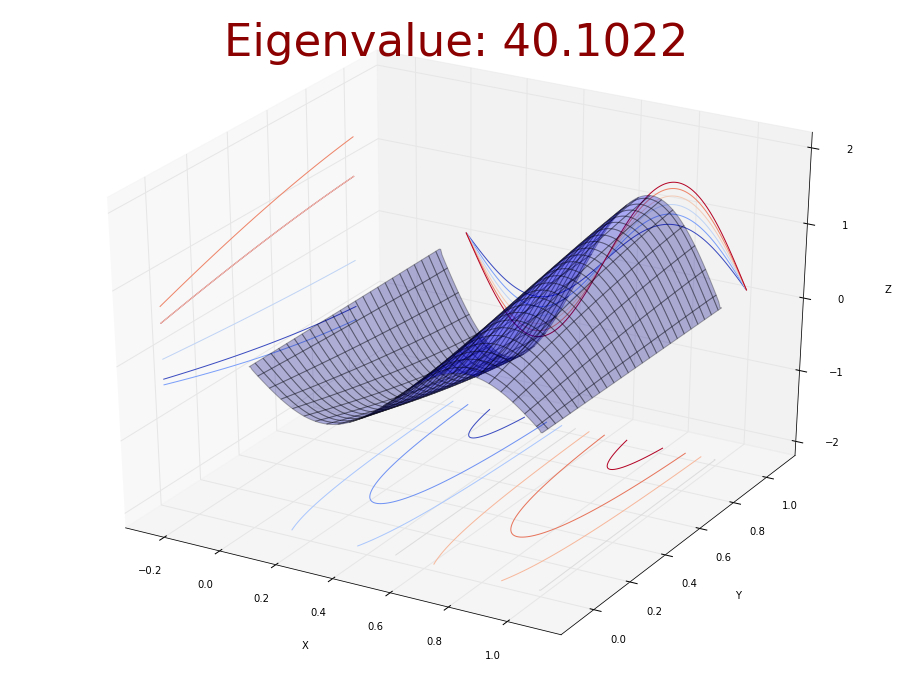}
	\caption{\small Eigenfunction corresponding to the 4$^{\text{th}}$ eigenvalue for quasi-periodic boundary conditions with $\alpha = \frac{\pi}{4}$. Only the real part is plotted.}
	\label{fig:qausiperiodic}
\end{figure*}

Since this is a relevant example for applications we show a convergence test for the fundamental level. For different values of the discretisation we have computed the norm of the difference between the fundamental state computed numerically and the analytical solution, i.e. $\norm{\Psi^N - \Psi}$. In a finite element model with linear polynomials, as is the case, this error is expected to decrease proportionally to the diameter, $\mathcal{K}$, of the finite elements, cf.\ \cite{brenner08}. Notice that $n$ is the number of elements along one side of the boundary and therefore the diameter of the finite elements is $\mathcal{K} \propto \frac{1}{n}$. The results are plotted in Fig.~\ref{fig:error} where it can be checked that the error follows the expected behaviour.

\begin{figure*}[h!]
        \center
	\includegraphics[width=11cm]{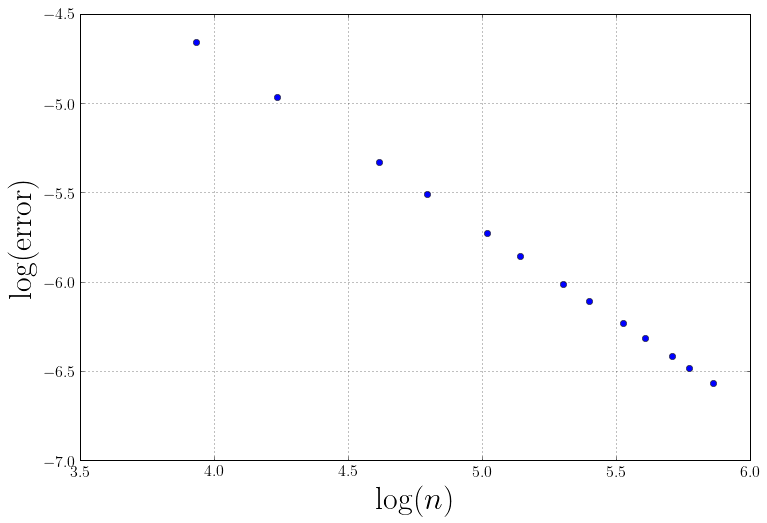}
	\caption{\small Logarithm of the error $\norm{\Psi^N - \Psi}$ of the fundamental state against the logarithm of the discretisation size for the values $n= 51, 69, 101, 121, 151, 171, 201, 221,$ $251, 273, 301, 321, 351.$  The numerical convergence test shows that  the error is proportional to $1/n$. }
	\label{fig:error}
\end{figure*}

\subsection{Discontinuous Robin boundary conditions}\label{ex:DiscontinuousRobin}

In this final example we will consider a situation with less trivial natural boundary conditions. We will consider piecewise constant Robin boundary conditions. These introduce discontinuities at the boundary at the points where the Robin parameter changes. The weak formulation of the Laplace operator presented here can handle with this singularities without special modifications as the analytical structure of the problem is well-defined even in this discontinuous situation.

The physical applications described by these boundary conditions are the same than in Section \ref{ex:Robin}. But in this case, since we are considering discontinuities at the boundary, this would represent that the region $\Omega$ is surrounded by distinct materials. 
This has applications in superconducting circuits where one has to consider that the superconductor is in contact with Josephson junctions and also with the substrate, which is an important source of decoherence \cite{cw08, mar05}.

The unitary operator in this case is almost identical to the one appearing in Section~\ref{ex:Robin} with the exception that for half of the boundary we have to take  a value of the Robin parameter $\Lambda_1$ different from the Robin parameter $\Lambda_2$ on the other half. The singularities are placed in the middle of the intervals $I_2$ and $I_4$\,. In Table~\ref{table:discness} the results for $\Lambda_1 = -\Lambda_2 = 1$ are shown.

\begin{table}[h]
\begin{tabular}{c|cccccc}
 & Eigenvalue&&&&&\\ 
	 n& 1& 2& 3& 4& 5& 6\\ 
\hline 
	101& -1.3236& 7.8229& 10.7855& 22.5004& 36.9358& 42.5570\\ 
	153& -1.4483& 7.5909& 10.7521& 22.4242& 36.7601& 42.3032\\ 
	201& -1.5149& 7.4705& 10.7339& 22.3796& 36.6794& 42.1556\\ 
	251& -1.5621& 7.3869& 10.7208& 22.3460& 36.6276& 42.0448\\ 
	301& -1.5966& 7.3264& 10.7110& 22.3199& 36.5923& 41.9597\\ 
	351& -1.6233& 7.2799& 10.7034& 22.2988& 36.5664& 41.8913\\ 
	401& -1.6448& 7.2428& 10.6972& 22.2812& 36.5465& 41.8344\\ 
\end{tabular}
\caption{\small Eigenvalues for discontinuous Robin boundary conditions for discretisation sizes $n = 101, 153, 201, 251, 301, 351, 401$.}\label{table:discness}
\end{table}

Since the exact solution cannot be obtained analytically, we have computed the approximation of the same problem with the standard approach for natural boundary conditions. That is, we have taken the basis of the Neumann problem and have computed the contribution of the matrix $F$, Eq.~\eqref{eq:geneigenvprob}, as 
$$\scalar{\varphi(x)}{\Lambda(x)\varphi(x)}\;.$$ Here, $\varphi(x)$ are the piecewise linear continuous functions that correspond to the restriction to the boundary of the Neumann boundary functions. The results are presented in Table~\ref{table:discnnat}.

\begin{table}[h]
\begin{tabular}{c|cccccc}
 & Eigenvalue&&&&&\\ 
	 n& 1& 2& 3& 4& 5& 6\\ 
\hline 
	101& -2.2909& 6.2365& 10.4792& 21.4133& 36.1646& 39.2590\\ 
	151& -2.3361& 6.1410& 10.4683& 21.4028& 36.0637& 39.2186\\ 
	201& -2.3597& 6.0923& 10.4630& 21.3992& 36.0152& 39.2014\\ 
	251& -2.3742& 6.0628& 10.4598& 21.3975& 35.9867& 39.1921\\ 
	301& -2.3840& 6.0430& 10.4576& 21.3966& 35.9680& 39.1862\\ 
	351& -2.3911& 6.0287& 10.4561& 21.3960& 35.9549& 39.1822\\ 
	401& -2.3964& 6.0180& 10.4549& 21.3956& 35.9450& 39.1794\\ 
\end{tabular}
\caption{\small Eigenvalues for discontinuous Robin boundary conditions for discretisation sizes $n = 101, 151, 201, 251, 301, 351, 401$ computed with the standard FEM for natural boundary conditions.}\label{table:discnnat}
\end{table}

As it can be seen, both methods converge. In both cases, the convergence of the negative eigenvalues is slow as is to be expected. However, the results do not converge to the same values. Although the computations for the standard approach converge slightly faster, we believe that the results of our approach, showed in Table \ref{table:discness}, are more trustable. In the algorithm introduced in this article, the boundary conditions are implemented exactly and the operator $A_{U}^N$ is the operator $A_U$ of the exact problem. Therefore, the convergence to the solutions of the exact problem is granted as shown in Section~\ref{sec:ArbitraryDimension}. On the contrary, using the standard approach to treat natural boundary conditions, the problem fails to be approximated exactly at the discontinuities. Indeed, at the discontinuities, the Robin parameter is approximated by a linear polynomial with slope $\frac{2}{h}$, being $h$ the step of the discretisation. Hence, the function $\Lambda(x)$ does not converge in the Sobolev norm of order 1 to the exact Robin function. Of course, the situation treated here is known to be singular and there are known ways to choose the boundary element functions in order to improve the standard approach, cf. \cite{strouboulis00} and references therein, as well as using mesh refinement around the discontinuity points. Our algorithm presents the advantage that it chooses implicitly appropriate base-functions at the boundary in the absence of an \emph{a priori} estimation of the discretisation or of the finite element basis.\\

\section{Conclusions}

A numerical scheme has been designed to implement a wide variety of boundary conditions to solve the spectral problem for the Laplace-Beltrami operator. The application of these boundary conditions is not limited to the Laplace-Beltrami operator and can be applied straightforwardly to other operators relevant to it like Schr\"odinger operators. 

The convergence of the scheme is proven rigorously for any dimension and is implemented in dimension two to show its capabilities. It is shown that it has applications in current and relevant physical problems. 

This scheme is able to deal simultaneously with essential and natural boundary conditions with no \emph{a priori} information and it is shown that it has better performance than the usual approach to deal with natural boundary conditions in the presence of discontinuities. The latter case is also relevant for physical applications as discussed in Section \ref{ex:DiscontinuousRobin}.

Future work will be devoted to implement more efficient versions of the scheme with faster convergence. For instance, admitting higher order degrees of the finite element model and variable geometries of the region $\Omega$. This will be done by complementing current available software with the structure of the boundary elements developed in this work. Other important development of this numerical scheme will be its adaptation to deal with other relevant operators in Physics like the Dirac operator. This would allow, for instance, to compute the spectral problem of energy excitations in Graphene layers \cite{SAHR11} or in the Quantum Spin Hall effect \cite{BZ06}. There are self-adjoint boundary conditions like the ones considered in \cite{CP17} or \cite{Pe17}, that are closely related to the situation described in Section \ref{ex:DiscontinuousRobin}, which are relevant for physical applications but that cannot be treated straightforwardly with the present approach. These types of boundary conditions will be considered in future research.

{\footnotesize

}

\newpage
\appendix
\section{The pseudocode}\label{pseudocode_sec}

\begin{figure}[!htb]
\centering
\includegraphics[width=7cm]{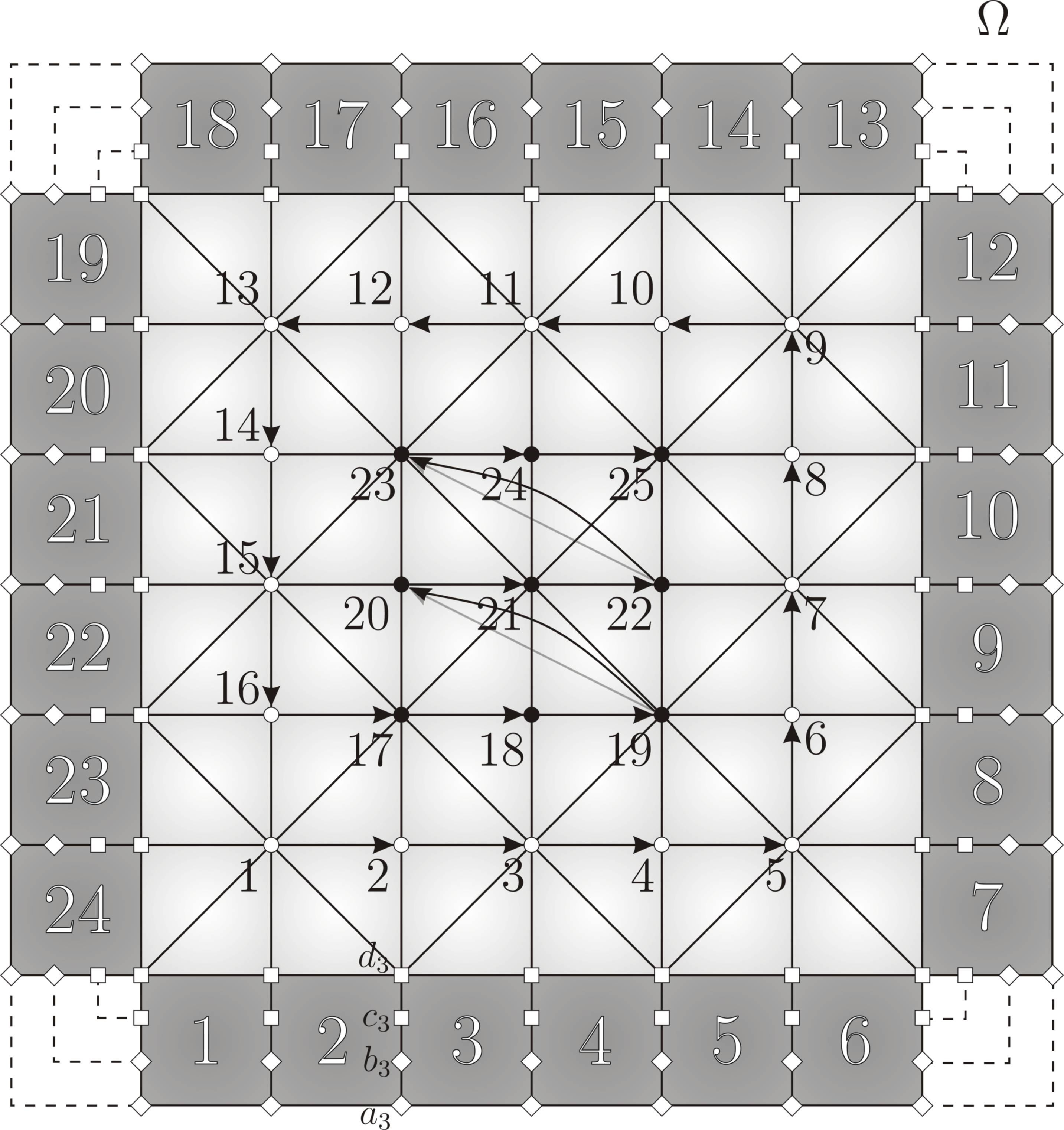}
\caption{\small Sorting of nodes at the bulk and at the boundary. Notice that odd nodes at the bulk correspond to star-like nodes and even nodes correspond to diamond-like nodes, see Fig.~\ref{two_nodes}.}
\label{sort:bulk}
\end{figure}

In this appendix, we show how to construct the algorithm described in section \ref{sec:FEM} in a schematic way. First of all, let us present the inputs and outputs of the program.
\medskip

\textbf{Inputs}:
\begin{itemize}[leftmargin=0.5in]

\item $n$, number of nodes in which both axis are discretised (remember that we have considered $n=m$ for simplicity and must be odd to obtain the triangularisation in Fig.\,\ref{Reticulo_ext}).

\item $U$, unitary matrix of size $2N_S\times 2N_S$ that contains the information of the boundary conditions satisfying Eq.~\eqref{eq:legendrebc}.

\end{itemize}

\textbf{Outputs}:
\begin{itemize}[leftmargin=0.5in]

\item $\phi_i$ and $u_i$, $i=1,\ldots,N$, $N=N_B+2N_S+r$, base-functions and coefficients of the expansion:
\begin{equation}\label{expansion_phi}
\Phi_N=\sum_{i=1}^{N}u_i\phi_i,
\end{equation}

\item $\lambda_i$, $i=1,\ldots,N$, eigenvalues of the spectral problem:
\begin{equation}
(M-F)u_i=\lambda_iBu_i,\qquad i=1,\ldots,N.
\end{equation}

\end{itemize}

Following, we write the pseudocode of the program we have designed based in MATLAB:\\
\\
\noindent{\color{line1}\textbf{function}} $\boldsymbol{[}\phi_i,u_i,\lambda_i\boldsymbol{]}$ = \textbf{SolFEM} $\boldsymbol{(}n,U\boldsymbol{)}$
	\vspace{-0.1cm}\begin{siderules}[leftmargin=0.2cm, linecolor=line1]

		\begin{itemize}[leftmargin=0.225in]

			\item[{\color{line2}$\diamond$}]\textbf{Base-functions at the boundary.}
				\vspace{-0.15cm}\begin{siderules}[leftmargin=-0.28cm, linecolor=line2]

					\vspace{-0.1cm}\begin{itemize}[leftmargin=0.225in]

						\item[$\bullet$] $N_S=4(n-1)$\,;\; $\displaystyle{h=\frac{1}{n+1}}$\,;

						\item[$\bullet$] Compute matrices $\mathcal{F}_0$, $\mathcal{F}_1$, $\mathcal{C}_0$ and $\mathcal{C}_1$ by means of Eqs.~\eqref{eq:linearsystemcomplete}.

						\item[$\bullet$] $\displaystyle{\mathcal{F}=\left[
							\begin{matrix}
\mathcal{F}_0\\
\mathcal{F}_1
\end{matrix}\right]}$\,;\; $\displaystyle{\mathcal{C}=\left[
\begin{matrix}
\mathcal{C}_0\\
\mathcal{C}_1
							\end{matrix}\right]}$\,;

						\item[$\bullet$] Compute the SVD of matrix $\mathcal{F}$, $\mathcal{F}=\mathcal{W}\mathcal{S}\mathcal{V}^\dagger$.
							\begin{itemize}[leftmargin=0.2in]
								{\color{mygreen}\vspace{-0cm}\item[$\#$]Notice that $\mathcal{S}=\text{diag}([s_1,\ldots, s_{2N_S}])$.}
							\end{itemize}
							
							\item[$\bullet$] $tol=2N_S\,\text{eps}\left(\text{max}(\text{diag}(\mathcal{S}))\right)$\,;
							
							\begin{itemize}[leftmargin=0.2in]
								{\color{mygreen}\vspace{-0cm}\item[$\#$]Default choice of tolerance in the rank computation of a matrix in MATLAB.}
								
								{\color{mygreen}\vspace{-0cm}\item[$\#$]$\text{eps}(x)$ is the distance between $|x|$ and the next larger number.}
								
							\end{itemize}
							
						\item[$\bullet$]$r=\text{sum}(\mathrm{diag}(\mathcal{S})>tol)$\,;
							


							\begin{itemize}[leftmargin=0.2in]
								{\color{mygreen}\vspace{-0cm}\item[$\#$]$r=\mathrm{rank}(\mathcal{F})$.}
							\end{itemize}

						\item[$\bullet$]$\mathcal{S}_{\mathrm{red}}=\mathcal{S}(1:r\,,\,1:r)$\,;
						\item[$\bullet$]$\mathcal{W}_\mathrm{red}=\mathcal{W}(\,:\,,\,1:r)$\,;\,$\mathcal{W}_\mathrm{null}=\mathcal{W}(\,:\,,\,r+1:2N_S)$\,;
							\begin{itemize}[leftmargin=0.2in]
								{\color{mygreen}\vspace{-0cm}\item[$\#$] Check if the system is compatible.}
							\end{itemize}

						\item[{\color{line3}\textbf{if}}] $\|\mathcal{W}_\mathrm{null}^\dagger\mathcal{C}\|_2> \,tol$
							\begin{siderules}[leftmargin=-0.25cm, linecolor=line3]
								\vspace{0cm}\begin{itemize}[leftmargin=0.225in]

									\item[$\bullet$] \textbf{error}(\,{\color{myviolet}\textbf{`System is not compatible'}}\,)\,;
%

								\end{itemize}

							\end{siderules}

						\hspace{-0.5cm}{\color{line3}\textbf{end}}

						\item[$\bullet$] Solve the system $\mathcal{S}_{\mathrm{red}}x = \mathcal{W}_{\mathrm{red}}^\dagger\mathcal{C}(\,:\,,\,1:r)$\,.

						\item[$\bullet$] Write $X_{\mathrm{Neumann}}$ as in Eq.~\eqref{Neumann_matrices_alg}.

						\item[$\bullet$] $\mathcal{V}_{\mathrm{red}}=\mathcal{V}(\,:\,,\,1:r)$\,;\;$\mathcal{V}_{\mathrm{null}}=\mathcal{V}(\,:\,,\,r+1:2N_S)$\,;

						\item[$\bullet$] $s = \mathcal{V}_{\mathrm{red}}x + \mathcal{V}_{\mathrm{null}}\mathcal{V}_{\mathrm{null}}^\dagger X_{\mathrm{Neumann}}(\,:\,,\,1:r)$\,;
						\begin{itemize}[leftmargin=0.2in]
							{\color{mygreen}\vspace{-0cm}\item[$\#$]Solution that minimises \eqref{minimise_s}.}
						\end{itemize}
						
						\item[{\color{line3}$\diamond$}] \textbf{Trivial base-functions at the boundary.}
						
							\vspace{-0cm}\begin{siderules}[leftmargin=-0.29cm, linecolor=line3]
							
								\begin{itemize}[leftmargin=0.2in]
									\item[{\color{mygreen}$\#$}]{\color{mygreen}Parameters for the $2N_S$ base-functions satisfying trivial boundary conditions \eqref{parameters_trivial}.}
									
									\item[{\color{mygreen}$\#$}]{\color{mygreen}$k=1,\ldots,N$ is the label of the squares at the boundary, as depicted in Fig.\,\ref{sort:bulk} with white numbers, \,and \,$j=1,\ldots,2N_S$}
								\end{itemize}
														
							\end{siderules}

					\end{itemize}
					
					\phantom{a}

				\end{siderules}

		\end{itemize}
		
		\phantom{a}

	\end{siderules}
\newpage	
			
	\vspace{0.0cm}\begin{siderules}[leftmargin=0.2cm, linecolor=line1]
	
	\hspace{0.2cm}{\color{line2}$\diamond$}
	\vspace{0.25cm}\phantom{a}
		\begin{itemize}[leftmargin=0.225in]

			\item[\phantom{a}]
				\vspace{-0.2cm}\begin{siderules}[leftmargin=-0.28cm, linecolor=line2]
				
				\hspace{0.2cm}{\color{line3}$\diamond$}
				
					\begin{itemize}[leftmargin=0.225in]
					
					\item[\phantom{a}]
						
							\vspace{-0.1cm}\begin{siderules}[leftmargin=-0.29cm, linecolor=line3]
							
								\begin{itemize}[leftmargin=0.2in]
									
									\item[{\color{mygreen}\phantom{$\#$}}]{\color{mygreen}is the label of the base-functions at the boundary.}

								\end{itemize}
								
								\hspace{-0cm}{\color{line4}\textbf{for}} $j=1:N_S$
							
								\vspace{-0cm}\begin{siderules}[leftmargin=0.29cm, linecolor=line4]
								
								\hspace{-0cm}{\color{line5}\textbf{for}} $k=1:N_S$
								
									\vspace{-0cm}\begin{siderules}[leftmargin=0.29cm, linecolor=line5]
									
										\vspace{-0.1cm}\begin{itemize}[leftmargin=0.42cm]
											
											\item[$\bullet$]$a^j_k=0$\,;\; $\displaystyle{b^j_k=\frac{1}{2}\delta^j_k}$\,;\; $c^j_k=\delta^j_k$\,;\; $d^j_k=0$\,;
											\item[$\bullet$]$a^{j+N_S}_k=0$\,;\; $\displaystyle{b^{j+N_S}_k=-\frac{1}{9}\delta^j_k}$\,;\; $c^{j+N_S}_k=0$\,;
											\item[$\bullet$]$d^{j+N_S}_k=\delta^j_k$\,;

										\end{itemize}
								
									\end{siderules}
									
									\vspace{-0.1cm}{\color{line5}\textbf{end}}
								
								\end{siderules}
								
								{\color{line4}\textbf{end}}
							
							\end{siderules}
							
							\vspace{-0.1cm}\hspace{-0.6cm}{\color{line3}\textbf{end}}
							
							\item[$\bullet$]$\displaystyle{V=\frac{1}{2}\left(\begin{matrix}
							\phantom{-}9 & -9 & \phantom{-}27 & -27 & \phantom{-}9 & -9 &\phantom{-}27 & -27\\
							-9 & \phantom{-}0 & \phantom{-}0 & \phantom{-}0 & \phantom{-}0 & \phantom{-}9 & -27 & \phantom{-}27\\
							-18 & \phantom{-}18 & -45 & \phantom{-}36 & -9 & \phantom{-}9 & -36 & \phantom{-}45\\
							 \phantom{-}18 & \phantom{-}0 & \phantom{-}0 & \phantom{-}0 & \phantom{-}0 & -9 & \phantom{-}36 & -45\\
							 \phantom{-}11 & -11 & \phantom{-}18 & -9 & \phantom{-}2 & -2 & \phantom{-}9 & -18\\
							 -11 & \phantom{-}0 & \phantom{-}0 & \phantom{-}0 & \phantom{-}0 & \phantom{-}2 & -9 & \phantom{-}18\\
							 -2 & \phantom{-}2& \phantom{-}0 & \phantom{-}0 & \phantom{-}0 & \phantom{-}0 & \phantom{-}0 & \phantom{-}0\\
							 \phantom{-}2 & \phantom{-}0& \phantom{-}0 & \phantom{-}0 & \phantom{-}0 & \phantom{-}0 & \phantom{-}0 & \phantom{-}0
														\end{matrix}\right)}\,;$
								\begin{itemize}[leftmargin=0.2in]
								
									\vspace{-0cm}\item[{\color{mygreen}$\#$}]{\color{mygreen}This matrix gives the parameters $p_{1j}^k,\ldots,p_{8j}^k$ in \eqref{base_function_square} from parameters $a_k^j, b_k^j,\ldots$ in $\mathcal{R}_S$ (see Fig.\,\ref{Ref_square}).}
								
								\end{itemize}
							
							\hspace{-0.6cm}{\color{line3}\textbf{for}} $j=1:2N_S+r$
							
								\vspace{-0cm}\begin{siderules}[leftmargin=-0.29cm, linecolor=line3]
								
								\hspace{-0cm}{\color{line4}\textbf{for}} $k=1:N_S$
								
									\vspace{-0cm}\begin{siderules}[leftmargin=0.29cm, linecolor=line4]
									
										\begin{itemize}[leftmargin=0.42cm]
										
											\item[$\bullet$]$\left[\begin{matrix}
											p_{1j}^k\!\!\!\!\!&p_{2j}^k\!\!\!\!\!&p_{3j}^k\!\!\!\!\!&p_{4j}^k\!\!\!\!\!&p_{5j}^k\!\!\!\!\!&p_{6j}^k\!\!\!\!\!&p_{7j}^k\!\!\!\!\!&p_{8j}^k
											\end{matrix}\right]^\top$\\
											
											\vspace{-0.2cm}$\phantom{a}\hspace{2.5cm}=V\left[\begin{matrix}
											a_{k}^j\!\!\!\!\!&a_{k+1}^j\!\!\!\!\!&b_{k+1}^j\!\!\!\!\!&c_{k+1}^j\!\!\!\!\!&d_{k+1}^j\!\!\!\!\!&d_{k}^j\!\!\!\!\!&c_{k}^j\!\!\!\!\!&b_{k}^j
											\end{matrix}\right]^\top$;

										\end{itemize}
								
									\end{siderules}
									
									\vspace{-0.1cm}{\color{line4}\textbf{end}}
								
								\end{siderules}
								
								\vspace{-0.1cm}\hspace{-0.6cm}{\color{line3}\textbf{end}}
								
					\end{itemize}
				
				\end{siderules}
				
				\vspace{-0.3cm}\hspace{-0.6cm}{\color{line2}\textbf{end}}
				
			\end{itemize}
			
			\hspace{-0.33cm}{\color{line2}$\diamond$} \,\textbf{Grid and base-functions at the bulk.}
			
				\vspace{-0.1cm}\begin{siderules}[leftmargin=0.29cm, linecolor=line2]
				
					\begin{itemize}[leftmargin=0.225in]
					
						\item[$\bullet$]$N_B=(n-2)^2\,;$
						
						\item[$\bullet$]Create the bulk grid in the square $[h,1-h]\times[h,1-h]$ (light grey
					
					\end{itemize}
				
				\end{siderules}
				
				\vspace{-0.2cm}\phantom{a}
				
		\end{siderules}
		
		\newpage
		
		\vspace{0.0cm}\begin{siderules}[leftmargin=0.2cm, linecolor=line1]
		
			\vspace{0cm}\hspace{0.2cm}{\color{line2}$\diamond$}
			
				\vspace{-0.1cm}\begin{siderules}[leftmargin=0.29cm, linecolor=line2]
				
					\begin{itemize}[leftmargin=0.225in]
					
						\item[\phantom{$\bullet$}]region in Fig.\,\ref{sort:bulk}) with step $h$ in both axis.
									
						\item[$\bullet$]Sort nodes as in Fig.\,\ref{sort:bulk}.
						
						\hspace{-0.58cm}{\color{line3}\textbf{for}} $i=1:N_B$
						
							\vspace{-0.1cm}\begin{siderules}[leftmargin=-0.31cm, linecolor=line3]
							
								\begin{itemize}[leftmargin=0.42cm]
								
									\item[$\bullet$]Compute the piecewise linear base-functions $\phi_i$ at the bulk according to Eqs.~\eqref{def_base_func} and \eqref{eq:basefunctionbulk}.
								
								\end{itemize}
							
							\end{siderules}
							
							\vspace{-0.cm}\hspace{-0.6cm}{\color{line3}\textbf{end}}
					
					\end{itemize}
				
				\end{siderules}
			
				\vspace{-0.15cm}\hspace{-0.54cm}{\color{line2}\textbf{end}}
				
				\hspace{-0.33cm}{\color{line2}$\diamond$} \,\textbf{Matrix elements at the bulk.}
				
					\vspace{-0.1cm}\begin{siderules}[leftmargin=0.29cm, linecolor=line2]
					
						\vspace{-0.1cm}\begin{itemize}[leftmargin=0.2in]
						
							\item[{\color{mygreen}$\#$}]{\color{mygreen}Matrices $M$, $B$ and $F$ are Hermitean, hence we only need to compute the elements in the diagonal and below it.}
						
						\end{itemize}
						
						\vspace{0.1cm}\hspace{-0.58cm}{\color{line3}\textbf{for}} $j=1:N_B$
						
							\vspace{-0.1cm}\begin{siderules}[leftmargin=0.24cm, linecolor=line3]
							
								\hspace{-0cm}{\color{line4}\textbf{for}} $i=j:N_B$
								
									\vspace{-0.1cm}\begin{siderules}[leftmargin=0.24cm, linecolor=line4]
									
										\vspace{-0.1cm}\begin{itemize}[leftmargin=0.225in]
										
											\item[$\bullet$]Compute $M_{ij}$ and $B_{ij}$ by means of Eqs.~\eqref{Bulk_matrices} and \eqref{Bulk_matrices_sum}.
										
										\end{itemize}
									
									\end{siderules}
									
									\vspace{-0.15cm}\hspace{-0.55cm}{\color{line4}\textbf{end}}
							
							\end{siderules}
							
							\vspace{-0.15cm}\hspace{-0.57cm}{\color{line3}\textbf{end}}
					
					\end{siderules}
				
					\vspace{-0.15cm}\hspace{-0.55cm}{\color{line2}\textbf{end}}
					
					\hspace{-0.33cm}{\color{line2}$\diamond$} \,\textbf{Matrix elements at the boundary.}
					
						\vspace{-0.2cm}\begin{siderules}[leftmargin=0.29cm, linecolor=line2]
						
							\vspace{-0.1cm}\begin{itemize}[leftmargin=0.2in]
						
								\item[{\color{mygreen}$\#$}]{\color{mygreen}Base-functions with $d_k^j\neq 0$ have non-vanishing contribution at the bulk, for that reason, we will divide the matrix elements of $M$ and $B$ in two parts: $*^S$ with only contribution in the squares and $*^B$ with only contribution in the triangles at the bulk. (Recall that $F$ has only contribution at the outer boundary of region $\Omega$).}
						
							\end{itemize}
						
						\vspace{0.1cm}\hspace{-0.58cm}{\color{line3}\textbf{for}} $i=N_S+1:2N_S$
						
							\vspace{-0.1cm}\begin{siderules}[leftmargin=0.24cm, linecolor=line3]
							
								\vspace{-0.2cm}\begin{itemize}[leftmargin=0.2in]
								
									\item[{\color{mygreen}$\#$}]{\color{mygreen}Trivial base-functions at the boundary with $d_k^j=\delta_k^j$.}
									
									\vspace{-0.1cm}\item[$\bullet$]Compute piecewise linear base-functions according to Eqs. \eqref{def_base_func} and \eqref{eq:basefunctionbulk} for $d_i^k$ nodes in Fig.\,\ref{sort:bulk} (nodes in the limit of light grey region).
								
								\end{itemize}
							
							\end{siderules}
							
							\vspace{-0.15cm}\hspace{-0.57cm}{\color{line3}\textbf{end}}
							
							\hspace{-0.52cm}{\color{line3}\textbf{if}} $r>N_S$
							
								\vspace{-0.1cm}\begin{siderules}[leftmargin=0.24cm, linecolor=line3]
							
									\vspace{-0.2cm}\begin{itemize}[leftmargin=0.2in]
								
										\item[{\color{mygreen}$\#$}]{\color{mygreen}There are non-trivial base-functions at the boundary with $d_k^j=\delta_k^j$.}
																		
									\end{itemize}
							
								\end{siderules}
								
								\vspace{-0.2cm}\phantom{a}

						\end{siderules}
		\phantom{a}				
		\vspace{-0.7cm}\phantom{a}
		
		\end{siderules}
		
		\newpage
		
		\vspace{0.0cm}\begin{siderules}[leftmargin=0.2cm, linecolor=line1]
	
			\hspace{0.2cm}{\color{line2}$\diamond$}
			
				\vspace{-0.1cm}\begin{siderules}[leftmargin=0.29cm, linecolor=line2]
					
						\vspace{-0cm}\begin{siderules}[leftmargin=0.26cm, linecolor=line3]
						
							\hspace{-0cm}{\color{line4}\textbf{for}} $i=3N_S+1:2N_S+r$
							
								\vspace{-0.1cm}\begin{siderules}[leftmargin=0.24cm, linecolor=line4]
								
									\vspace{-0.1cm}\begin{itemize}[leftmargin=0.225in]
										
										\item[$\bullet$]Compute piecewise linear base-functions.
										
									\end{itemize}
								
								\end{siderules}
								
								\vspace{0.1cm}\hspace{-0.55cm}{\color{line4}\textbf{end}}
						
						\end{siderules}
						
						\vspace{-0.1cm}\hspace{-0.55cm}{\color{line3}\textbf{end}}	
						
							\vspace{-0.2cm}\begin{itemize}[leftmargin=0.2in]
						
								\item[{\color{mygreen}$\#$}]{\color{mygreen}Following, we compute the matrix elements of $M$ and $B$ at the boundary.}
						
							\end{itemize}
						
						\vspace{-0.2cm}\hspace{-0.5cm}{\color{line3}\textbf{for}} $j=1:2N_S+r$
						
							\vspace{-0.1cm}\begin{siderules}[leftmargin=0.26cm, linecolor=line3]
							
								\vspace{-0.1cm}\hspace{-0cm}{\color{line4}\textbf{for}} $i=j:2N_S+r$
							
									\vspace{-0.1cm}\begin{siderules}[leftmargin=0.24cm, linecolor=line4]
									
										\vspace{-0.2cm}\begin{itemize}[leftmargin=0.2in]
						
											\item[{\color{mygreen}$\#$}]{\color{mygreen}Squares contribution.}
						
										\end{itemize}
									
										\vspace{-0.2cm}\begin{itemize}[leftmargin=0.225in]
													
											\item[$\bullet$]Compute $M^S_{i+N_B,\,j+N_B}$ and $B^S_{i+N_B,\,j+N_B}$ with Eqs.~\eqref{matrices_M_B} and \eqref{base_function_square}.
													
										\end{itemize}
										
										\vspace{-0.1cm}\hspace{-0.48cm}{\color{line5}\textbf{if}} $(N_S+1\leq j\leq 2N_S \;\; \text{{\color{line5}\textbf{or}}}\;\;3N_S+1\leq j\leq 2N_S+r)$	
										
											\vspace{-0.15cm}\begin{siderules}[leftmargin=0.24cm, linecolor=line5]
											
												\vspace{-0.15cm}$\text{{\color{line5}\textbf{and}}}\;\;(N_S+1\leq i\leq 2N_S \;\; \text{{\color{line5}\textbf{or}}}\;\;3N_S+1\leq i\leq 2N_S+r)$
												
													\vspace{-0.2cm}\begin{itemize}[leftmargin=0.2in]
						
														\item[{\color{mygreen}$\#$}]{\color{mygreen}Bulk contribution.}
						
													\end{itemize}
												
												\vspace{-0.5cm}\begin{itemize}[leftmargin=0.225in]
													
													\item[$\bullet$]Compute $M^B_{i+N_B,\,j+N_B}$ and $B^B_{i+N_B,\,j+N_B}$ with Eqs. \eqref{Bulk_matrices} and \eqref{Bulk_matrices_sum}.	
													\item[$\bullet$]$M_{i+N_B,\,j+N_B}=M^B_{i+N_B,\,j+N_B}+M^S_{i+N_B,\,j+N_B}$\,;
													\item[$\bullet$]$B_{i+N_B,\,j+N_B}=B^B_{i+N_B,\,j+N_B}+B^S_{i+N_B,\,j+N_B}$\,;
													
												\end{itemize}
											
											\end{siderules}
											
											\vspace{-0.15cm}\hspace{-0.48cm}{\color{line5}\textbf{else}}
										
											\vspace{-0.1cm}\begin{siderules}[leftmargin=0.24cm, linecolor=line5]
												
												\vspace{-0.1cm}\begin{itemize}[leftmargin=0.225in]
													
													\item[$\bullet$]$M_{i+N_B,\,j+N_B}=M^S_{i+N_B,\,j+N_B}$\,;
													\item[$\bullet$]$B_{i+N_B,\,j+N_B}=B^S_{i+N_B,\,j+N_B}$\,;
													
												\end{itemize}
											
											\end{siderules}		
											
											\vspace{-0.15cm}\hspace{-0.48cm}{\color{line5}\textbf{end}}						
								
									\end{siderules}
								
									\vspace{-0.15cm}\hspace{-0.55cm}{\color{line4}\textbf{end}}							
							
							\end{siderules}
							
							\vspace{-0.15cm}\hspace{-0.55cm}{\color{line3}\textbf{end}}	
							
							\vspace{-0.2cm}\begin{itemize}[leftmargin=0.2in]
						
								\item[{\color{mygreen}$\#$}]{\color{mygreen}Computation of the elements of matrix $F$.}
						
							\end{itemize}
							
							\vspace{-0.2cm}\hspace{-0.5cm}{\color{line3}\textbf{for}} $j=2N_S+1:2N_S+r$
						
							\vspace{-0.1cm}\begin{siderules}[leftmargin=0.26cm, linecolor=line3]
							
								\vspace{-0.1cm}\hspace{-0cm}{\color{line4}\textbf{for}} $i=j:2N_S+r$
							
									\vspace{-0.1cm}\begin{siderules}[leftmargin=0.24cm, linecolor=line4]
									
										\vspace{-0.2cm}\begin{itemize}[leftmargin=0.225in]
										
											\item[$\bullet$]Compute $F_{i+N_B,\,j+N_B}$ with Eqs.~\eqref{matrix_F} and \eqref{base_function_square}.
										
										\end{itemize}
									
									\end{siderules}
									
									\vspace{-0.15cm}\hspace{-0.55cm}{\color{line4}\textbf{end}}	
							
							\end{siderules}
							
							\vspace{-0.15cm}\hspace{-0.55cm}{\color{line3}\textbf{end}}	
											
				\end{siderules}
				
				\vspace{-0.15cm}\hspace{-0.55cm}{\color{line2}\textbf{end}}	
				
			\end{siderules}
				
				\newpage
				
				\vspace{0.0cm}\begin{siderules}[leftmargin=0.2cm, linecolor=line1]
	
					\hspace{0.2cm}{\color{line2}$\diamond$} \,\textbf{Cross matrix elements.}
				
					\vspace{-0.1cm}\begin{siderules}[leftmargin=0.29cm, linecolor=line2]
					
						\vspace{-0cm}\begin{itemize}[leftmargin=0.2in]
						
							\item[{\color{mygreen}$\#$}]{\color{mygreen}Cross matrix elements of the first $N_S-8$ base-functions (which are sorted in counterclockwise order in Fig.\,\ref{sort:bulk}) with base-functions at the boundary with $d_k^j=\delta_k^j$.}
						
						\end{itemize}
								
					\vspace{0.1cm}\hspace{-0.5cm}{\color{line3}\textbf{for}} $j=1:N_S-8$
							
						\vspace{-0.1cm}\begin{siderules}[leftmargin=0.24cm, linecolor=line3]
						
							\vspace{-0.1cm}\hspace{-0cm}{\color{line4}\textbf{for}} $i=1:2N_S+r$
							
								\vspace{-0.1cm}\begin{siderules}[leftmargin=0.24cm, linecolor=line4]
								
									\vspace{-0.1cm}\hspace{0.1cm}{\color{line5}\textbf{if}} $(N_S+1\leq i\leq 2N_S \;\; \text{{\color{line5}\textbf{or}}}\;\;3N_S+1\leq i\leq 2N_S+r)$	
										
											\vspace{-0.15cm}\begin{siderules}[leftmargin=0.26cm, linecolor=line5]
											
												\vspace{-0cm}\begin{itemize}[leftmargin=0.225in]
												
													\item[$\bullet$]Compute $M_{i+N_B,\,j}$ and $B_{i+N_B,\,j}$ with Eqs.~\eqref{Bulk_matrices} and \eqref{Bulk_matrices_sum}.
												
												\end{itemize}
											
											\end{siderules}
											
											\vspace{-0.15cm}\hspace{-0.48cm}{\color{line5}\textbf{end}}
								
								\end{siderules}
								
								\vspace{-0.15cm}\hspace{-0.55cm}{\color{line4}\textbf{end}}
								
						\end{siderules}
								
						\vspace{-0.15cm}\hspace{-0.55cm}{\color{line3}\textbf{end}}
				
				\end{siderules}
				
				\vspace{-0.15cm}\hspace{-0.55cm}{\color{line2}\textbf{end}}
				
				\hspace{-0.33cm}{\color{line2}$\diamond$} \,\textbf{Matrix elements in the upper triangular part.}
				
					\vspace{-0.15cm}\begin{siderules}[leftmargin=0.29cm, linecolor=line2]		
					
						\vspace{-0.1cm}\begin{itemize}[leftmargin=0.2in]
						
							\item[{\color{mygreen}$\#$}]{\color{mygreen}Fill the upper triangular part of matrices $M$, $B$ and $F$ with the elements below the main diagonal.}
						
						\end{itemize}
				
						\vspace{0.1cm}\hspace{-0.5cm}{\color{line3}\textbf{for}} $j=1:N_B+2N_S+r-1$
				
							\vspace{-0.1cm}\begin{siderules}[leftmargin=0.26cm, linecolor=line3]
					
								\vspace{-0.1cm}\hspace{-0cm}{\color{line4}\textbf{for}} $i=j+1:N_B+2N_S+r-1$
						
									\vspace{-0.1cm}\begin{siderules}[leftmargin=0.26cm, linecolor=line4]
							
										\vspace{-0cm}\begin{itemize}[leftmargin=0.225in]
								
											\item[$\bullet$]$M_{ji}=M_{ij}$\,;\; $B_{ji}=B_{ij}$\,;\; $F_{ji}=F_{ij}$\,;
								
										\end{itemize}
							
								\end{siderules}
							
								\vspace{-0.15cm}\hspace{-0.55cm}{\color{line4}\textbf{end}}
											
							\end{siderules}
					
					\vspace{-0.15cm}\hspace{-0.55cm}{\color{line3}\textbf{end}}
					
					\end{siderules}
					
					\vspace{-0.15cm}\hspace{-0.55cm}{\color{line2}\textbf{end}}
					
					\hspace{-0.33cm}{\color{line2}$\diamond$} \,\textbf{Spectral problem.}
					
						\vspace{-0.15cm}\begin{siderules}[leftmargin=0.29cm, linecolor=line2]	
						
							\vspace{-0.1cm}\begin{itemize}[leftmargin=0.225in]
							
								\item[$\bullet$]Solve $(M-F)u_i=\lambda_i Bu_i$.
							
							\end{itemize}
						
						\end{siderules}
						
						\vspace{-0.15cm}\hspace{-0.55cm}{\color{line2}\textbf{end}}
										
		\end{siderules}
		
\vspace{-0.15cm}\noindent{\color{line1}\textbf{end}}

\end{document}